\newtheorem{theorem}{Theorem}[section]
\newtheorem{lemma}[theorem]{Lemma}
\newtheorem{corollary}[theorem]{Corollary}
\theoremstyle{definition}
\newtheorem{definition}[theorem]{Definition}
\theoremstyle{remark}
\newtheorem{remark}[theorem]{Remark}
\numberwithin{equation}{section}
\begin{document}

\setcounter{page}{1}

\title[The polar decomposition and $n$-centered operators]{The polar decomposition for adjointable operators on Hilbert $C^*$-modules and $n$-centered operators}

\author[N. Liu, W. Luo, Q. Xu]{Na Liu, Wei Luo, \MakeLowercase{and} Qingxiang Xu$^{*}$}

\address{$^{}$Department of Mathematics, Shanghai Normal University, Shanghai 200234, PR China.}
\email{\textcolor[rgb]{0.00,0.00,0.84}{liunana0616@163.com;
luoweipig1@163.com; qingxiang\_xu@126.com}}




\subjclass[2010]{Primary 46L08; Secondary 47A05.}

\keywords{Hilbert $C^*$-module, polar decomposition, centered operator, $n$-centered operator, binormal operator.}


\begin{abstract}
Let $n$ be any natural number. The $n$-centered operator is introduced for adjointable operators on Hilbert $C^*$-modules.
Based on the characterizations of  the polar decomposition for the product of two adjointable operators, $n$-centered operators, centered operators as well as binormal operators are clarified, and some results known for the Hilbert space operators are improved.  It is proved that for an adjointable operator $T$, if $T$ is Moore-Penrose invertible and is $n$-centered, then its Moore-Penrose inverse is also $n$-centered. A Hilbert space operator $T$ is constructed such that $T$ is $n$-centered, whereas it fails to be $(n+1)$-centered.\\
\end{abstract} \maketitle

\section{\textbf{Introduction}}

 Much progress has been made in the study of the polar decomposition and its applications both for Hilbert space operators
 \cite{Furuta,Gesztesy-Malamud-Mitrea-Naboko,Halmos,Ichinose-Iwashita,Stochel-Szafraniec}, and for adjointable operators on Hilbert $C^*$-modules \cite{Frank-Sharifi,Gebhardt-Schm¨¹dgen,Karizaki-Hassani-Amyari,Szafraniec,Wegge-Olsen}. Let $H$ be a Hilbert space and $\mathbb{B}(H)$ be the set of bounded linear operators on $H$. For any $T\in\mathbb{B}(H)$, let $T=U|T|$ be the polar decomposition of $T$. One application of the polar decomposition is the study of the centered operator, which was introduced by Morrel and Muhly  in  \cite{Morrel}.  When $T$ is a centered operator, some characterizations of $|T^n|$ for  $n\in\mathbb{N}$ were carried out in \cite{Morrel} and through which it was proved therein that
 \begin{equation}\label{equ:polar decomposition all n}T^n=U^n|T^n|\ \mbox{is the polar decomposition for all $n\in\mathbb{N}$}.\end{equation}
By clarifying the polar decomposition for the product of two operators, the authors in \cite{Ito-Yamazaki-Yanagida} showed that \eqref{equ:polar decomposition all n} is  also  sufficient for a Hilbert space operator to be centered. Among other things, many other equivalent conditions for the centered operators were established in \cite{Ito} for Hilbert space operators.
These equivalent conditions were inspected for Hilbert $C^*$-module operators in our
recent paper \cite{Liu-Luo-Xu} and they are proved to be also true for an adjointable operator whenever its polar decomposition is guaranteed.

Another application of the polar decomposition is the study of the binormal operator (a kind of operators more general than the centered operators), which was introduced by Campbell in \cite{Campbell-1} and was studied in many literatures; see \cite{Campbell-2,Ito} for example.
The purpose of this paper is, in the general setting of adjointable operators on $C^*$-modules, to give a new insight into \eqref{equ:polar decomposition all n}  and give some new characterizations of the binormal operators.

Let $n$ be any natural number. The $n$-centered operator is introduced  in this paper for adjointable operators on Hilbert $C^*$-modules.
Based on the characterizations of the polar decomposition for the product of two adjointable operators, $n$-centered operators, centered operators as well as binormal operators are clarified. As a result, the main results of \cite{Ito-Yamazaki-Yanagida} are improved in the general setting of adjointable operators on Hilbert $C^*$-modules; see Remarks~\ref{rem:improvement-1}, \ref{rem:improvement-2} and \ref{rem:improvement-3}, and Corollary~\ref{thm:another result for centered operator}. Also, some new characterizations of the binormal operators are provided with parameters $\alpha$ and $\beta$; see
Theorem~\ref{thm:equivalent conditions of binormal} and  Remark~\ref{rem:improvement-4}.

In addition for any natural number $n$, it is proved in this paper that for an adjointable operator $T$, if $T$ is Moore-Penrose invertible and is $n$-centered, then its Moore-Penrose inverse is also $n$-centered. Furthermore, a Hilbert space operator $T$ is constructed such that $T$ is $n$-centered, whereas it fails to be $(n+1)$-centered.

The paper is organized as follows. In Section~\ref{sec:some preliminaries}, we recall some basic knowledge about Hilbert $C^*$-modules, and provide some elementary results about the commutativity, the range closures as well as the polar decomposition for adjointable operators. In Section~\ref{sec:the polar decomposition of product}, we study the polar decomposition for the product of two adjointable operators. The $n$-centered operators, centered operators and binormal operators on Hilbert $C^*$-modules are studied in Section~\ref{sec:binormal-Aluthge-n centered}. In Section~\ref{sec:M-P inverse of $n$-centered operator}, we study the Moore-Penrose inverse of $n$-centered operators. In the last section, we focus on the construction of a Hilbert space operator $T$ such that $T$ is $n$-centered, whereas it is not $(n+1)$-centered.

\section{\textbf{Some preliminaries}}\label{sec:some preliminaries}

Hilbert $C^*$-modules are generalizations of Hilbert spaces by allowing inner products to take values in some $C^{*}$-algebras instead of the complex  field. Let $\mathfrak{A}$ be a $C^*$-algebra. An
inner-product $\mathfrak{A}$-module is a linear space $E$ which is a right
$\mathfrak{A}$-module, together with a map $(x,y)\to \big<x,y\big>:E\times E\to
\mathfrak{A}$ such that for any $x,y,z\in E, \alpha, \beta\in \mathbb{C}$ and
$a\in \mathfrak{A}$, the following conditions hold:
\begin{enumerate}
\item[(i)] $\langle x,\alpha y+\beta
z\rangle=\alpha\langle x,y\rangle+\beta\langle x,z\rangle;$

\item[(ii)] $\langle x, ya\rangle=\langle x,y\rangle a;$

\item[(iii)] $\langle y,x\rangle=\langle x,y\rangle^*;$

\item[(iv)] $\langle x,x\rangle\ge 0, \ \mbox{and}\ \langle x,x\rangle=0\Longleftrightarrow x=0.$
\end{enumerate}

An inner-product $\mathfrak{A}$-module $E$ which is complete with respect to
the induced norm ($\Vert x\Vert=\sqrt{\Vert \langle x,x\rangle\Vert}$ for
$x\in E$) is called a (right) Hilbert $\mathfrak{A}$-module.

Suppose that $H$ and $K$ are two Hilbert $\mathfrak{A}$-modules, let ${\mathcal L}(H,K)$
be the set of operators $T:H\to K$ for which there is an operator $T^*:K\to
H$ such that $$\langle Tx,y\rangle=\langle x,T^*y\rangle \ \mbox{for any
$x\in H$ and $y\in K$}.$$  We call ${\mathcal
L}(H,K)$ the set of adjointable operators from $H$ to $K$. For any
$T\in {\mathcal L}(H,K)$, the range and the null space of $T$ are denoted by
${\mathcal R}(T)$ and ${\mathcal N}(T)$, respectively. In case
$H=K$, ${\mathcal L}(H,H)$ which is abbreviated to ${\mathcal L}(H)$, is a
$C^*$-algebra. Let ${\mathcal L}(H)_{sa}$ and ${\mathcal L}(H)_+$ be the set of  self-adjoint elements and positive elements
in $\mathcal{L}(H)$, respectively. The notation $T\ge 0$ is also used to indicate that $T$ is an element of ${\mathcal L}(H)_+$.

\begin{definition}\label{def:commutator of a and b}
 Let  $\mathfrak{B}$  be a $C^*$-algebra. The set of positive elements of $\mathfrak{B}$ is denoted by $\mathfrak{B}_+$.  For any $a,b\in \mathfrak{B}$,  let $[a,b]=ab-ba$ be the commutator of $a$ and $b$.
\end{definition}
\begin{lemma} \label{lem:commutative property extended-1} Let $a$ and $b$ be two elements in a $C^*$-algebra $\mathfrak{B}$.
\begin{enumerate}
\item[{\rm(i)}] If $a=a^*$ and $[a,b]=0$, then $[f(a),b]=0$  whenever $f$ is a continuous complex-valued  function on the interval $[-\Vert a\Vert, \Vert a\Vert]$;
\item[{\rm(ii)}] If $a,b\in \mathfrak{B}_+$, then $ab\in \mathfrak{B}_+$ if and only if \,$[a,b]=0$.
\end{enumerate}
\end{lemma}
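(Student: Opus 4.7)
My plan is to treat the two parts separately, with part (i) carrying the real work (a standard functional calculus / Weierstrass-approximation argument) and part (ii) following easily from (i) together with the observation that positivity forces self-adjointness.

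For part (i), I would first establish the polynomial case by induction. From $[a,b]=0$ one gets $a^{n+1}b = a\cdot a^{n}b = a\cdot b\cdot a^{n} = b\cdot a^{n+1}$, so $[a^{n},b]=0$ for every $n\geq 0$, and by $\mathbb{C}$-linearity $[p(a),b]=0$ for every polynomial $p\in\mathbb{C}[t]$. Since $a=a^{*}$, the spectrum $\sigma(a)$ lies in $[-\|a\|,\|a\|]$, and the continuous functional calculus at $a$ is an isometric $*$-homomorphism $C(\sigma(a))\to C^{*}(a,1)\subseteq\mathfrak{B}$; composing with the restriction map $C([-\|a\|,\|a\|])\to C(\sigma(a))$, we obtain a contractive map $f\mapsto f(a)$ on $C([-\|a\|,\|a\|])$. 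Given $f\in C([-\|a\|,\|a\|])$, the Weierstrass approximation theorem yields polynomials $p_{k}$ with $p_{k}\to f$ uniformly on $[-\|a\|,\|a\|]$, hence $p_{k}(a)\to f(a)$ in norm. Since left- and right-multiplication by the fixed element $b$ are norm-continuous,
\begin{equation*}
[f(a),b]=\lim_{k\to\infty}[p_{k}(a),b]=0,
\end{equation*}
which completes (i).

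For part (ii), the forward implication is almost immediate: if $ab\in\mathfrak{B}_{+}$ then $ab$ is self-adjoint, and so $ab=(ab)^{*}=b^{*}a^{*}=ba$, i.e.\ $[a,b]=0$. For the reverse implication, assume $a,b\in\mathfrak{B}_{+}$ and $[a,b]=0$. Apply part (i) with $f(t)=|t|^{1/2}$ (so that $f(a)=a^{1/2}$) to conclude $[a^{1/2},b]=0$; apply (i) once more, now with $a$ replaced by $b$ and with the same $f$, to conclude $[a^{1/2},b^{1/2}]=0$. Setting $c=a^{1/2}b^{1/2}$, the commutativity of $a^{1/2}$ and $b^{1/2}$ gives $c^{*}=b^{1/2}a^{1/2}=c$, and
\begin{equation*}
ab=a^{1/2}a^{1/2}b^{1/2}b^{1/2}=a^{1/2}b^{1/2}a^{1/2}b^{1/2}=c^{2}=c^{*}c\in\mathfrak{B}_{+},
\end{equation*}
as required.

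I do not expect any genuine obstacle: the only subtlety is keeping the domain of the functional calculus straight, since $\sigma(a)$ may be a proper subset of $[-\|a\|,\|a\|]$; this is resolved by noting that continuous functions on the larger interval restrict to continuous functions on the spectrum, and the norm estimate $\|p_{k}(a)-f(a)\|\leq\|p_{k}-f\|_{\infty,\sigma(a)}\leq\|p_{k}-f\|_{\infty,[-\|a\|,\|a\|]}$ still lets the approximation argument go through.
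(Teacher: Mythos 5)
Your proposal is correct and follows essentially the same route as the paper: the paper cites \cite[Proposition~2.3]{Liu-Luo-Xu} for part (i) (whose proof is the same polynomial-plus-Weierstrass approximation argument you give), and for part (ii) it likewise uses (i) to commute $a^{1/2}$ past $b$ and then exhibits $ab$ in a manifestly positive form. The only cosmetic difference is the final positivity witness: the paper writes $ab=a^{1/2}b\,a^{1/2}\in\mathfrak{B}_+$ directly, whereas you take one extra step to get $[a^{1/2},b^{1/2}]=0$ and write $ab=c^{*}c$ with $c=a^{1/2}b^{1/2}$; both are immediate.
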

\begin{proof} (i) A proof can be found in  \cite[Proposition~2.3]{Liu-Luo-Xu}.

(ii) If  $ab\in \mathfrak{B}_+$, then $ab=(ab)^*=ba$, hence $[a,b]=0$. Conversely, if $[a,b]=0$, then
by (i)  we have $ab=a^\frac12(a^\frac12 b)=a^\frac12 b\, a^\frac12 \in \mathfrak{B}_+$.
\end{proof}

Throughout the rest of this paper, $\mathfrak{A}$ is a $C^*$-algebra, $E,H$ and $K$ are Hilbert $\mathfrak{A}$-modules.

\begin{definition}
A closed
submodule $M$ of  $H$ is said to be
orthogonally complemented  if $H=M\dotplus M^\bot$, where
$$M^\bot=\big\{x\in H: \langle x,y\rangle=0\ \mbox{for any}\ y\in
M\big\}.$$
In this case, the projection from $H$ onto $M$ is denoted by $P_M$.
\end{definition}

An elementary result on the commutativity of adjointable operators is as follows:

\begin{lemma}\label{lem:commutative property extended-3}{\rm \cite[Propositions~2.4 and 2.6]{Liu-Luo-Xu}} Let $S\in \mathcal{L}(H)$ and $T\in\mathcal{L}(H)_+$ be such that $[S,T]=0$. Then the following statements are valid:
\begin{enumerate}
\item[{\rm (i)}] $[S,T^\alpha]=0$ for any $\alpha>0$;
\item[{\rm (ii)}] If in addition $\overline{\mathcal{R}(T)}$ is orthogonally complemented, then $\big[S,  P_{\overline{\mathcal{R}(T)}}\big]=0$.
\end{enumerate}
\end{lemma}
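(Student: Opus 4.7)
The plan is to deduce both parts from the functional calculus machinery of Lemma~\ref{lem:commutative property extended-1}(i), with part~(ii) requiring a subsequent approximation step.

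For part~(i), I would argue directly. Since $T\in\mathcal{L}(H)_+$ is self-adjoint and $[T,S]=0$, Lemma~\ref{lem:commutative property extended-1}(i) applied with $a=T$ and $b=S$ gives $[f(T),S]=0$ for every continuous complex-valued $f$ on $[-\|T\|,\|T\|]$. Taking $f$ to be any continuous extension of $t\mapsto t^\alpha$ from $[0,\|T\|]$ to $[-\|T\|,\|T\|]$ (for instance $|t|^{\alpha}$), the fact that $\mathrm{sp}(T)\subseteq[0,\|T\|]$ forces $f(T)=T^\alpha$ via the continuous functional calculus; hence $[T^\alpha,S]=0$.

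For part~(ii), the idea is to approximate the projection by operators of the form $f(T)$, each of which commutes with $S$ by the same principle. I would set
\[
T_n := T\bigl(T+\tfrac{1}{n}\bigr)^{-1},\qquad n\in\mathbb{N},
\]
noting that $T+\tfrac{1}{n}$ is invertible in $\mathcal{L}(H)$ since its spectrum lies in $[\tfrac{1}{n},\|T\|+\tfrac{1}{n}]$. Because $t\mapsto 1/(t+\tfrac{1}{n})$ is continuous on $[0,\|T\|]$, Lemma~\ref{lem:commutative property extended-1}(i) yields $[(T+\tfrac{1}{n})^{-1},S]=0$, which together with $[T,S]=0$ gives $[T_n,S]=0$ for every $n$. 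It then remains to show $T_n x\to P_{\overline{\mathcal{R}(T)}}x$ for every $x\in H$, so that letting $n\to\infty$ in $ST_n x = T_n Sx$ and invoking continuity of $S$ produces $[S,P_{\overline{\mathcal{R}(T)}}]=0$.

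The convergence is where I expect the main work. Using the orthogonal decomposition $H=\overline{\mathcal{R}(T)}\dotplus\mathcal{N}(T)$ (valid because $\mathcal{N}(T)=\overline{\mathcal{R}(T)}^\perp$ for the self-adjoint $T$ whose range-closure is complemented), I would verify convergence on each summand: for $y\in\mathcal{N}(T)$, direct computation gives $T_n y=0=P_{\overline{\mathcal{R}(T)}}y$; for $z=Tw\in\mathcal{R}(T)$, the identity $T_n z - z = -\tfrac{1}{n}T(T+\tfrac{1}{n})^{-1}w$ combined with $\|T(T+\tfrac{1}{n})^{-1}\|\le 1$ gives $\|T_n z-z\|\le \|w\|/n\to 0$; and the uniform bound $\|T_n\|\le 1$ then extends convergence to all $z\in\overline{\mathcal{R}(T)}$ by a standard three-$\varepsilon$ argument. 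The main obstacle is precisely this approximation step in the $C^*$-module setting, where spectral measures are unavailable: every estimate must stay within the continuous functional calculus, and the orthogonal complementarity of $\overline{\mathcal{R}(T)}$ is essential both to decompose $x$ and to identify $\mathcal{N}(T)$ as the relevant kernel direction.
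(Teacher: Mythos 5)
Your proof is correct, but note that the paper itself offers no proof of this lemma at all: it is quoted verbatim from \cite[Propositions~2.4 and 2.6]{Liu-Luo-Xu}, so there is nothing in the present text to compare against line by line. Your part~(i) is exactly the expected one-line deduction from Lemma~\ref{lem:commutative property extended-1}(i) with $f(t)=|t|^{\alpha}$, and it is sound. For part~(ii), your approximation $T_n=T\bigl(T+\tfrac1n\bigr)^{-1}$ works: each $T_n$ is a continuous function of $T$ and hence commutes with $S$, the bound $\|T_n\|\le 1$ is correct, the identification $\overline{\mathcal{R}(T)}^{\perp}=\mathcal{N}(T^*)=\mathcal{N}(T)$ is valid for any adjointable self-adjoint $T$, and the pointwise convergence $T_n x\to P_{\overline{\mathcal{R}(T)}}x$ on the two summands of $H=\overline{\mathcal{R}(T)}\dotplus\mathcal{N}(T)$ goes through as you describe. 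A shorter route, more in the spirit of the block-matrix argument the paper alludes to in Lemma~\ref{lem:trivial prop-0}, is worth knowing: from $[S,T]=0$ one gets $[S^*,T]=0$ by taking adjoints, so both $S$ and $S^*$ leave $\mathcal{R}(T)$, hence $\overline{\mathcal{R}(T)}$, invariant; writing $P=P_{\overline{\mathcal{R}(T)}}$ this gives $PSP=SP$ and $PS^*P=S^*P$, and the adjoint of the latter yields $PSP=PS$, whence $SP=PS$ with no limiting process. Your approximation argument buys nothing extra here, but it is self-contained and equally rigorous.
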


A direct application of Lemma~\ref{lem:commutative property extended-3} (ii) is as follows:
\begin{lemma}\label{prop:commutative property extended-4} Let $S,T\in\mathcal{L}(H)_+$ be such that both
$\overline{\mathcal{R}(S)}$ and $\overline{\mathcal{R}(T)}$ are orthogonally complemented. If $[S,T]=0$, then
\begin{equation*}\big[S, P_{\overline{\mathcal{R}(T)}}\big]=\big[T, P_{\overline{\mathcal{R}(S)}}\big]=\big[P_{\overline{\mathcal{R}(S)}},P_{\overline{\mathcal{R}(T)}}\big]=0.
\end{equation*}
\end{lemma}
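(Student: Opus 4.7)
The plan is to derive all three commutator identities by invoking Lemma~\ref{lem:commutative property extended-3}(ii) three times, treating the projections successively as the ``$S$'' of that lemma.

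First, applying Lemma~\ref{lem:commutative property extended-3}(ii) directly with $S$ playing the role of ``$S$'' and $T$ playing the role of ``$T$'' yields $\bigl[S, P_{\overline{\mathcal{R}(T)}}\bigr]=0$, since $T\in\mathcal{L}(H)_+$, $[S,T]=0$, and $\overline{\mathcal{R}(T)}$ is orthogonally complemented by hypothesis. Second, by the symmetric roles of $S$ and $T$ (the assumption $[S,T]=0$ is equivalent to $[T,S]=0$, and $\overline{\mathcal{R}(S)}$ is also orthogonally complemented), the same lemma gives $\bigl[T, P_{\overline{\mathcal{R}(S)}}\bigr]=0$.

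Finally, to obtain the third identity, I would feed the second into Lemma~\ref{lem:commutative property extended-3}(ii) once more, now with $P_{\overline{\mathcal{R}(S)}}\in\mathcal{L}(H)$ in the ``$S$'' slot and $T\in\mathcal{L}(H)_+$ in the ``$T$'' slot. The commutation hypothesis of that lemma is precisely $\bigl[P_{\overline{\mathcal{R}(S)}},T\bigr]=0$, which has just been established, and $\overline{\mathcal{R}(T)}$ is orthogonally complemented, so we conclude $\bigl[P_{\overline{\mathcal{R}(S)}}, P_{\overline{\mathcal{R}(T)}}\bigr]=0$.

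Since each step is a direct invocation of the already-established lemma, there is no genuine obstacle; the only point requiring a little care is observing that the hypotheses of Lemma~\ref{lem:commutative property extended-3}(ii) are again satisfied when one of the positive operators has been replaced by a projection, which is immediate because projections are positive and bounded. A remark worth including is that the three identities should be proved in this cascading order, because the third one is not a direct consequence of $[S,T]=0$ but genuinely uses the second identity as input.
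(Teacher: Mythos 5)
Your proposal is correct and follows essentially the same route as the paper, which presents this lemma precisely as ``a direct application of Lemma~\ref{lem:commutative property extended-3}~(ii)'' and leaves the three-fold cascading application (first $[S,P_{\overline{\mathcal{R}(T)}}]=0$, then $[T,P_{\overline{\mathcal{R}(S)}}]=0$ by symmetry, then feeding the latter back in to get $[P_{\overline{\mathcal{R}(S)}},P_{\overline{\mathcal{R}(T)}}]=0$) implicit. One small simplification: in your third application the projection sits in the slot of Lemma~\ref{lem:commutative property extended-3} that only requires membership in $\mathcal{L}(H)$, so you do not actually need to remark that projections are positive.
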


Next, we state two results about the range closures of adjointable operators.

\begin{lemma}\label{lem:rang characterization-1} {\rm \cite[Proposition 2.7]{Liu-Luo-Xu}} Let $A\in\mathcal{L}(H,K)$ and $B,C\in\mathcal{L}(E,H)$ be such that $\overline{\mathcal{R}(B)}=\overline{\mathcal{R}(C)}$. Then $\overline{\mathcal{R}(AB)}=\overline{\mathcal{R}(AC)}$.
\end{lemma}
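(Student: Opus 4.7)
The plan is to prove the two inclusions $\overline{\mathcal{R}(AB)} \subseteq \overline{\mathcal{R}(AC)}$ and $\overline{\mathcal{R}(AC)} \subseteq \overline{\mathcal{R}(AB)}$ separately; by symmetry it suffices to establish the first one, with the roles of $B$ and $C$ interchanged giving the second. The whole argument rests on the fact that any adjointable operator between Hilbert $C^*$-modules is bounded, hence continuous with respect to the norm, so the closure of the image of a dense subset is controlled.

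First, I would pick an arbitrary $y \in \mathcal{R}(AB)$, write $y = A(Bx)$ for some $x \in E$, and observe that $Bx \in \mathcal{R}(B) \subseteq \overline{\mathcal{R}(B)} = \overline{\mathcal{R}(C)}$. Hence one can select a sequence $\{z_n\} \subseteq \mathcal{R}(C)$ with $z_n \to Bx$ in norm. Each $z_n$ has the form $z_n = C u_n$ for some $u_n \in E$, so $A z_n = A C u_n \in \mathcal{R}(AC)$.

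Next, by the boundedness of $A$, we have $A z_n \to A(Bx) = y$, which places $y$ in $\overline{\mathcal{R}(AC)}$. Thus $\mathcal{R}(AB) \subseteq \overline{\mathcal{R}(AC)}$, and taking closures yields $\overline{\mathcal{R}(AB)} \subseteq \overline{\mathcal{R}(AC)}$. Exchanging the roles of $B$ and $C$ completes the proof.

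There is essentially no main obstacle here: the argument is a routine continuity-plus-density calculation, and nothing in it requires us to leave the category of bounded linear operators or to invoke anything peculiar to the $C^*$-module setting. The only small point worth flagging explicitly is that the hypothesis $\overline{\mathcal{R}(B)} = \overline{\mathcal{R}(C)}$ is used only through the approximation step $Bx = \lim z_n$ with $z_n \in \mathcal{R}(C)$, which is why we cannot simply write the result as an equality of ranges (without closures) unless additional regularity assumptions are imposed.
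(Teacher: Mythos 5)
Your proof is correct: the continuity-plus-density argument (approximate $Bx$ by elements $Cu_n$ of $\mathcal{R}(C)$ and use boundedness of the adjointable operator $A$ to pass the limit through) establishes $\overline{\mathcal{R}(AB)}\subseteq\overline{\mathcal{R}(AC)}$, and symmetry gives equality. The paper itself does not prove this lemma but quotes it from \cite[Proposition~2.7]{Liu-Luo-Xu}, so there is nothing to compare against; your argument is the standard one and is complete.
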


\begin{lemma}\label{lem:Range Closure of Ta and T} {\rm (\cite[Proposition 3.7]{Lance} and \cite[Proposition 2.9]{Liu-Luo-Xu})} Let $T\in\mathcal{L}(H,K)$.
\begin{enumerate}
\item[{\rm(i)}] Then $\overline {\mathcal{R}(T^*T)}=\overline{ \mathcal{R}(T^*)}$ and $\overline {\mathcal{R}(TT^*)}=\overline{ \mathcal{R}(T)}$;
\item[{\rm(ii)}] If $T\in \mathcal{L}(H)_+$, then $\overline{\mathcal{R}(T^{\alpha})}=\overline{\mathcal{R}(T)}$ for any $\alpha>0$.
\end{enumerate}
\end{lemma}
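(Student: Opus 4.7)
My plan is to prove both parts by combining an obvious algebraic containment with a norm-controlled approximation coming from continuous functional calculus applied to a positive adjointable operator.

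For part (i), the inclusion $\overline{\mathcal{R}(T^{*}T)}\subseteq\overline{\mathcal{R}(T^{*})}$ is immediate from $T^{*}Tx=T^{*}(Tx)$. For the reverse inclusion I would set, for each $\epsilon>0$,
\[A_{\epsilon}:=T^{*}T(T^{*}T+\epsilon)^{-1}T^{*}\in\mathcal{L}(K,H),\]
which visibly factors through $T^{*}T$, so $A_{\epsilon}y\in\mathcal{R}(T^{*}T)$ for every $y\in K$. A direct algebraic manipulation gives $T^{*}-A_{\epsilon}=\epsilon(T^{*}T+\epsilon)^{-1}T^{*}$, and applying the $C^{*}$-identity $\|B\|^{2}=\|BB^{*}\|$ together with continuous functional calculus for $T^{*}T$ yields
\[\|T^{*}-A_{\epsilon}\|^{2}=\bigl\|\epsilon^{2}T^{*}T(T^{*}T+\epsilon)^{-2}\bigr\|=\sup_{t\in\sigma(T^{*}T)}\frac{\epsilon^{2}t}{(t+\epsilon)^{2}}\le\frac{\epsilon}{4},\]
where the last step is the AM--GM bound $(t+\epsilon)^{2}\ge 4t\epsilon$. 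Letting $\epsilon\to 0^{+}$ places every $T^{*}y$ in $\overline{\mathcal{R}(T^{*}T)}$, and the companion identity $\overline{\mathcal{R}(TT^{*})}=\overline{\mathcal{R}(T)}$ follows by interchanging $T$ and $T^{*}$.

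For part (ii), let $T\in\mathcal{L}(H)_{+}$ and $\alpha>0$. I would handle both inclusions uniformly via the scalar functions
\[f_{\epsilon}(t)=\frac{t^{\alpha+1}}{t+\epsilon}\quad\text{and}\quad g_{\epsilon}(t)=\frac{t^{\alpha+1}}{t^{\alpha}+\epsilon},\]
which converge uniformly on the compact interval $[0,\|T\|]$ to $t^{\alpha}$ and $t$ respectively as $\epsilon\to 0^{+}$; this follows from elementary piecewise bounds on the error functions $\epsilon t^{\alpha}/(t+\epsilon)$ and $\epsilon t/(t^{\alpha}+\epsilon)$. Using the commuting factorizations $f_{\epsilon}(T)=T\cdot T^{\alpha}(T+\epsilon)^{-1}$ and $g_{\epsilon}(T)=T^{\alpha}\cdot T(T^{\alpha}+\epsilon)^{-1}$ valid in the continuous functional calculus, one realizes $T^{\alpha}y$ as a norm-limit of elements of $\mathcal{R}(T)$ and $Ty$ as a norm-limit of elements of $\mathcal{R}(T^{\alpha})$, giving both inclusions at once.

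The only genuine obstacle is the pair of uniform functional-calculus estimates on a compact spectral interval; once they are in place, everything else is a routine use of the fact that $\mathcal{R}(S)$ is stable under left composition with any adjointable operator, so that all the approximating elements---and hence their norm-limits---lie in the relevant range closures.
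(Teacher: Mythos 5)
Your argument is correct and complete. Be aware, though, that the paper itself offers no proof of this lemma: it is imported wholesale from \cite[Proposition~3.7]{Lance} and \cite[Proposition~2.9]{Liu-Luo-Xu}, so there is no in-paper argument to compare against. What you have written is essentially the standard proof from those sources. In part (i) the approximants $A_{\epsilon}=T^{*}T(T^{*}T+\epsilon)^{-1}T^{*}$, the $C^{*}$-identity $\Vert B\Vert^{2}=\Vert BB^{*}\Vert$, and the spectral bound $\epsilon^{2}t/(t+\epsilon)^{2}\le \epsilon/4$ constitute exactly the classical route; in part (ii) the two factorizations through $f_{\epsilon}$ and $g_{\epsilon}$ correctly yield both inclusions, and the uniform convergence on $[0,\Vert T\Vert]$ is the only analytic content --- your piecewise bounds do establish it, including the slightly more delicate case $0<\alpha<1$ where the interval must be split at a small $\delta$. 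Two small points worth making explicit in a full write-up: first, $(T^{*}T+\epsilon)^{-1}$, $(T+\epsilon)^{-1}$ and $(T^{\alpha}+\epsilon)^{-1}$ are adjointable because they are inverses of invertible positive elements of the $C^{*}$-algebra $\mathcal{L}(H)$, so the approximants genuinely lie in $\mathcal{R}(T^{*}T)$, $\mathcal{R}(T)$ and $\mathcal{R}(T^{\alpha})$ respectively; second, the passage from ``every $T^{*}y$ lies in $\overline{\mathcal{R}(T^{*}T)}$'' to the equality of closures uses that $\overline{\mathcal{R}(T^{*}T)}$ is a closed submodule containing $\mathcal{R}(T^{*})$, which is immediate but should be said.
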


\begin{definition}  An element $U$ of $\mathcal{L}(H,K)$ is said to be a partial isometry if $U^*U$ is a projection in
$\mathcal{L}(H)$.
\end{definition}

\begin{lemma}\label{prop:basic property of partial isometry} {\rm \cite[Proposition~3.2]{Liu-Luo-Xu}}\ Let $U\in\mathcal{L}(H,K)$  be a partial isometry. Then $U^*$ is also a partial isometry such that $UU^*U=U$.
\end{lemma}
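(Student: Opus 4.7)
The plan is to establish the identity $UU^*U=U$ first, and then deduce from it that $UU^*$ is a projection (which is exactly the statement that $U^*$ is a partial isometry).

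Set $P=U^*U\in\mathcal{L}(H)$. By hypothesis, $P$ is a projection, so $P^*=P$ and $P^2=P$. The key computation is to expand
\begin{equation*}
(U-UP)^*(U-UP)=U^*U-U^*UP-PU^*U+PU^*UP=P-P^2-P^2+P^3,
\end{equation*}
and observe that, since $P^2=P^3=P$, this expression equals $0$. Using the $C^*$-identity in $\mathcal{L}(K,H)$, namely $\|T\|^{2}=\|T^{*}T\|$ applied to $T=U-UP\in\mathcal{L}(H,K)$, we conclude that $U-UP=0$, i.e.\ $UU^*U=U$.

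Once this is available, the rest is immediate. The operator $UU^*\in\mathcal{L}(K)$ is self-adjoint by inspection, and
\begin{equation*}
(UU^*)^{2}=(UU^*U)U^*=UU^*,
\end{equation*}
using the identity just proved. Hence $UU^*$ is a projection in $\mathcal{L}(K)$, which means exactly that $U^*$ is a partial isometry according to the definition.

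I do not anticipate any real obstacle here, because the proof reduces to the algebraic manipulation above together with the $C^*$-norm identity $\|T^*T\|=\|T\|^2$, both of which hold in $\mathcal{L}(H,K)$ in exactly the same form as for Hilbert space operators; no extra care is required for the module setting, since no range-closure or orthogonal complementability assumptions appear in the statement.
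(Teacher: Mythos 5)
Your proof is correct and is the standard argument: the paper itself does not reprove this lemma but cites \cite[Proposition~3.2]{Liu-Luo-Xu}, whose proof is essentially the same computation, namely showing $(U-UU^*U)^*(U-UU^*U)=0$ and invoking the identity $\Vert T^*T\Vert=\Vert T\Vert^2$, which is indeed valid for adjointable operators between Hilbert $C^*$-modules. The deduction that $UU^*$ is a self-adjoint idempotent, hence that $U^*$ is a partial isometry under the paper's definition, is also exactly right.
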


For any $T\in\mathcal{L}(H,K)$, let $|T|$ denote the square root of $T^*T$. That is, $|T|=(T^*T)^\frac12$ and $|T^*|=(TT^*)^\frac12$.
\begin{definition}\label{defn:defn of polar decomposition} {\rm \cite[Definition 3.10]{Liu-Luo-Xu}}
The polar decomposition of $T\in\mathcal{L}(H,K)$ can be characterized as
\begin{equation}\label{equ:two conditions of polar decomposition}T=U|T|\ \mbox{and}\ U^*U=P_{\overline{\mathcal{R}(T^*)}},\end{equation}
where $U\in\mathcal{L}(H,K)$ is a partial isometry.
\end{definition}

\begin{lemma}\label{thm:existence condition for polar decomposition} {\rm \cite[Lemma~3.6 and Theorem 3.8]{Liu-Luo-Xu}} Let $T\in\mathcal{L}(H,K)$. Then the following two statements are equivalent:
\begin{enumerate}
\item[{\rm (i)}]  $\overline{\mathcal{R}(T^*)}$ and $\overline{\mathcal{R}(T)}$ are both orthogonally complemented;
\item[{\rm (ii)}] There exists $U\in\mathcal{L}(H,K)$ such that
\begin{equation}\label{equ:defn of polar decomposition for T}T=U|T| \ \mbox{and}\  U^*U=P_{\overline{\mathcal{R}(T^*)}}.\end{equation}
\end{enumerate}
In each case, it holds that
\begin{eqnarray}\label{equ:the polar decomposition of T star-pre stage}T^*&=&U^*|T^*| \ \mbox{and}\  UU^*=P_{\overline{\mathcal{R}(T)}},\\
\label{eqn:key telationship between two 1/2}|T^*|&=&U|T| U^*\ \mbox{and}\ U|T|=|T^*|U.\end{eqnarray}
\end{lemma}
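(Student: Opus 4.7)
The plan is to split the equivalence into its two implications and then derive \eqref{equ:the polar decomposition of T star-pre stage} and \eqref{eqn:key telationship between two 1/2} from the existence of the decomposition. For (ii)$\Rightarrow$(i), the assumption $U^*U=P_{\overline{\mathcal{R}(T^*)}}\in\mathcal{L}(H)$ immediately makes $\overline{\mathcal{R}(T^*)}$ orthogonally complemented. To handle $\overline{\mathcal{R}(T)}$, I would invoke Lemma~\ref{prop:basic property of partial isometry} to obtain $UU^*U=U$, which both shows that $UU^*$ is a projection and yields $\mathcal{R}(U)=\mathcal{R}(UU^*)$. Since $U=UP_{\overline{\mathcal{R}(T^*)}}$ while $\overline{\mathcal{R}(|T|)}=\overline{\mathcal{R}(T^*)}$ by Lemma~\ref{lem:Range Closure of Ta and T}, Lemma~\ref{lem:rang characterization-1} gives $\overline{\mathcal{R}(T)}=\overline{\mathcal{R}(U|T|)}=\overline{\mathcal{R}(U)}$; combined with the closedness of the range of any projection, this forces $\mathcal{R}(UU^*)=\overline{\mathcal{R}(T)}$. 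Hence $\overline{\mathcal{R}(T)}$ is orthogonally complemented and $UU^*=P_{\overline{\mathcal{R}(T)}}$, which already delivers the first half of \eqref{equ:the polar decomposition of T star-pre stage}.

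For (i)$\Rightarrow$(ii), the core task is to construct the partial isometry $U$. The identity $\langle|T|x,|T|x\rangle=\langle T^*Tx,x\rangle=\langle Tx,Tx\rangle$ shows that $V:|T|x\mapsto Tx$ is a well-defined, $\mathfrak{A}$-linear, inner-product-preserving map on $\mathcal{R}(|T|)$, extending by continuity to a bijective isometry $\widetilde V:\overline{\mathcal{R}(|T|)}\to\overline{\mathcal{R}(T)}$. Using that both of these ranges are orthogonally complemented, I would set
\begin{equation*}
U=\widetilde V\circ P_{\overline{\mathcal{R}(T^*)}}:H\to K,\qquad U^{\sharp}=\widetilde V^{-1}\circ P_{\overline{\mathcal{R}(T)}}:K\to H.
\end{equation*}
The main obstacle is adjointability: I would check $U^*=U^{\sharp}$ via
\begin{equation*}
\langle Ux,y\rangle=\big\langle\widetilde V P_{\overline{\mathcal{R}(T^*)}}x,P_{\overline{\mathcal{R}(T)}}y\big\rangle=\big\langle P_{\overline{\mathcal{R}(T^*)}}x,\widetilde V^{-1}P_{\overline{\mathcal{R}(T)}}y\big\rangle=\langle x,U^{\sharp}y\rangle,
\end{equation*}
where the middle equality uses that $\widetilde V$ preserves inner products on $\overline{\mathcal{R}(T^*)}$; this is precisely the step where both orthogonal-complementability hypotheses are consumed, since in a general Hilbert $C^*$-module a bounded module map need not be adjointable. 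Once adjointability is secured, $T=U|T|$ holds by construction and $U^*U=P_{\overline{\mathcal{R}(T^*)}}$ follows from $\widetilde V^{-1}\widetilde V=\mathrm{id}_{\overline{\mathcal{R}(T^*)}}$ together with $U$ vanishing on $\overline{\mathcal{R}(T^*)}^\perp$.

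The remaining identities fall out computationally. From $\mathcal{R}(|T|)\subseteq\overline{\mathcal{R}(|T|)}=\overline{\mathcal{R}(T^*)}$ one has $P_{\overline{\mathcal{R}(T^*)}}|T|=|T|$, and taking adjoints gives $|T|P_{\overline{\mathcal{R}(T^*)}}=|T|$. Then
\begin{equation*}
(U|T|U^*)^2=U|T|(U^*U)|T|U^*=U|T|P_{\overline{\mathcal{R}(T^*)}}|T|U^*=U|T|^2U^*=UT^*TU^*=TT^*,
\end{equation*}
and since $U|T|U^*$ is self-adjoint and manifestly positive (via $\langle U|T|U^*y,y\rangle=\langle|T|U^*y,U^*y\rangle\ge 0$), uniqueness of the positive square root forces $|T^*|=U|T|U^*$. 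Right-multiplying by $U$ and applying $U^*U=P_{\overline{\mathcal{R}(T^*)}}$ produces $|T^*|U=U|T|P_{\overline{\mathcal{R}(T^*)}}=U|T|$, while $U^*|T^*|=U^*U|T|U^*=P_{\overline{\mathcal{R}(T^*)}}|T|U^*=|T|U^*=T^*$ supplies the remaining part of \eqref{equ:the polar decomposition of T star-pre stage}, completing \eqref{eqn:key telationship between two 1/2} as well.
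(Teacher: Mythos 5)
Your argument is correct. A caveat on the comparison: this paper does not actually prove the lemma --- it imports it verbatim from \cite[Lemma~3.6 and Theorem~3.8]{Liu-Luo-Xu} --- so there is no in-paper proof to measure you against; I can only assess your proof on its own terms, and it holds up. The construction you give is the standard one, and you correctly identify the only genuinely delicate point: in a Hilbert $C^*$-module the isometric extension $\widetilde V$ of $|T|x\mapsto Tx$ is automatically a bounded module map but not automatically adjointable, and it is exactly the orthogonal complementability of $\overline{\mathcal{R}(T^*)}$ and $\overline{\mathcal{R}(T)}$ that lets you write down the candidate adjoint $\widetilde V^{-1}\circ P_{\overline{\mathcal{R}(T)}}$ and verify it. The converse direction via $UU^*U=U$ (Lemma~\ref{prop:basic property of partial isometry}) together with $\overline{\mathcal{R}(U|T|)}=\overline{\mathcal{R}(U)}=\mathcal{R}(UU^*)$ from Lemmas~\ref{lem:rang characterization-1} and \ref{lem:Range Closure of Ta and T} is exactly right, and the closing computations ($(U|T|U^*)^2=TT^*$ plus positivity and uniqueness of square roots) correctly deliver \eqref{eqn:key telationship between two 1/2} and \eqref{equ:the polar decomposition of T star-pre stage}. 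Two tiny points you glossed over but which are routine: $\widetilde V$ is surjective onto $\overline{\mathcal{R}(T)}$ because an isometric image of a complete submodule is closed and contains the dense set $\mathcal{R}(T)$, and preservation of the $\mathfrak{A}$-valued inner product (not just the norm) follows from $\langle |T|x,|T|y\rangle=\langle Tx,Ty\rangle$ and continuity.
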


\begin{remark}\label{rem:existence of polar decomposition} It follows from Lemma~\ref{thm:existence condition for polar decomposition} and \cite[Lemma~3.9]{Liu-Luo-Xu} that $T\in\mathcal{L}(H,K)$ has the (unique) polar decomposition if and only if $\overline{\mathcal{R}(T^*)}$ and $\overline{\mathcal{R}(T)}$ are both orthogonally complemented, and in this case $T^*=U^*|T^*|$ is the polar decomposition of $T^*$.
\end{remark}

\section{\textbf{The polar decomposition for the product of two operators}}\label{sec:the polar decomposition of product}

In this section, we study the polar decomposition for the product of two adjointable operators on Hilbert $C^*$-modules.

\begin{lemma}\label{lem:polar decomposition of product of operator-1}
{\rm (cf.\,\cite[Theorem~2.1]{Ito-Yamazaki-Yanagida})}\   Let $T=U|T|$ and $S=V|S|$ be the polar decompositions of $T,S\in\mathcal{L}(H)$, respectively. Then $TS$ has the polar decomposition if and only if $|T| |S^*|$ has the polar decomposition.
\end{lemma}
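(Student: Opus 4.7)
By Remark~\ref{rem:existence of polar decomposition}, an adjointable operator admits a polar decomposition if and only if the closures of its range and of the range of its adjoint are both orthogonally complemented. Since $T=U|T|$ and $S=V|S|$ are polar decompositions, the four closures $\overline{\mathcal{R}(T)}$, $\overline{\mathcal{R}(T^*)}$, $\overline{\mathcal{R}(S)}$, $\overline{\mathcal{R}(S^*)}$ are orthogonally complemented in $H$. The assertion therefore reduces to showing that $\overline{\mathcal{R}(TS)}$ and $\overline{\mathcal{R}((TS)^*)}$ are orthogonally complemented in $H$ if and only if $\overline{\mathcal{R}(|T||S^*|)}$ and $\overline{\mathcal{R}(|S^*||T|)}=\overline{\mathcal{R}((|T||S^*|)^*)}$ are.

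My first step is to derive the range identities
\[
\overline{\mathcal{R}(TS)}=\overline{\mathcal{R}(U|T||S^*|)}\quad\text{and}\quad\overline{\mathcal{R}((TS)^*)}=\overline{\mathcal{R}(V^*|S^*||T|)}.
\]
For the first, $TS(TS)^*=T\,SS^*\,T^*=T|S^*|^2T^*=(T|S^*|)(T|S^*|)^*$, so two applications of Lemma~\ref{lem:Range Closure of Ta and T}(i) give $\overline{\mathcal{R}(TS)}=\overline{\mathcal{R}(T|S^*|)}$; the factorization $T|S^*|=U|T||S^*|$ then yields the identity. For the second, from $(TS)^*(TS)=S^*|T|^2S=(|T|S)^*(|T|S)$ the same lemma gives $\overline{\mathcal{R}((TS)^*)}=\overline{\mathcal{R}(S^*|T|)}$; taking adjoints of the identity $V|S|=|S^*|V$ in \eqref{eqn:key telationship between two 1/2} produces $|S|V^*=V^*|S^*|$, whence $S^*|T|=|S|V^*|T|=V^*|S^*||T|$.

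Next I transfer orthogonal complementability along the partial isometries. Since $U^*U=P_{\overline{\mathcal{R}(T^*)}}$ and $UU^*=P_{\overline{\mathcal{R}(T)}}$ by Lemma~\ref{thm:existence condition for polar decomposition}, the restriction $U|_{\overline{\mathcal{R}(T^*)}}$ is an adjointable isomorphism of Hilbert $\mathfrak{A}$-modules onto $\overline{\mathcal{R}(T)}$, with inverse $U^*|_{\overline{\mathcal{R}(T)}}$; analogously $V^*|_{\overline{\mathcal{R}(S)}}$ is an isomorphism onto $\overline{\mathcal{R}(S^*)}$. Because $\overline{\mathcal{R}(|T||S^*|)}\subseteq\overline{\mathcal{R}(|T|)}=\overline{\mathcal{R}(T^*)}$ and $\overline{\mathcal{R}(|S^*||T|)}\subseteq\overline{\mathcal{R}(|S^*|)}=\overline{\mathcal{R}(S)}$ by Lemma~\ref{lem:Range Closure of Ta and T}, a short continuity-plus-isometry argument identifies $\overline{\mathcal{R}(U|T||S^*|)}=U\bigl(\overline{\mathcal{R}(|T||S^*|)}\bigr)$ and likewise $\overline{\mathcal{R}(V^*|S^*||T|)}=V^*\bigl(\overline{\mathcal{R}(|S^*||T|)}\bigr)$. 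Under these module isomorphisms, orthogonal complementability of $\overline{\mathcal{R}(|T||S^*|)}$ in $\overline{\mathcal{R}(T^*)}$ is equivalent to that of $\overline{\mathcal{R}(TS)}$ in $\overline{\mathcal{R}(T)}$, and the analogous equivalence holds for the adjoint direction.

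The final step invokes the elementary fact that if $M\subseteq K\subseteq H$ and $K$ is orthogonally complemented in $H$, then $M$ is orthogonally complemented in $H$ if and only if it is orthogonally complemented in $K$; applying this to the already orthogonally complemented submodules $\overline{\mathcal{R}(T^*)}$, $\overline{\mathcal{R}(T)}$, $\overline{\mathcal{R}(S)}$, $\overline{\mathcal{R}(S^*)}$ upgrades the previous equivalence to one in the ambient module $H$, completing the argument. The principal obstacle is that, in the $C^*$-module setting, orthogonal complementability is a nontrivial property: each transfer step (along the Hilbert-module isomorphisms induced by $U$ and $V^*$, and between a complemented submodule and its ambient module) must be read off from the defining identities of the polar decomposition, whereas the Hilbert space prototype in \cite{Ito-Yamazaki-Yanagida} can ignore this issue because every closed subspace of a Hilbert space is automatically orthogonally complemented.
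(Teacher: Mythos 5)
Your argument is correct, but it takes a genuinely different route from the paper's. The paper proves the lemma constructively: from a polar decomposition $|T|\,|S^*|=W_1\big||T|\,|S^*|\big|$ it manufactures the explicit partial isometry $UW_1V$ for $TS$ (and conversely $U^*W_2V^*$ for $|T|\,|S^*|$ from $TS=W_2|TS|$), the engine being the intertwining relation $V|TS|=\big||T|\,|S^*|\big|V$ of \eqref{equ:formula through |TS|}, after which the two defining conditions \eqref{equ:two conditions of polar decomposition} are verified by hand. You instead reduce everything to the existence criterion of Lemma~\ref{thm:existence condition for polar decomposition}/Remark~\ref{rem:existence of polar decomposition} and transfer orthogonal complementability of the relevant range closures along the module unitaries $U|_{\overline{\mathcal{R}(T^*)}}$ and $V^*|_{\overline{\mathcal{R}(S)}}$, using the identities $\overline{\mathcal{R}(TS)}=U\bigl(\overline{\mathcal{R}(|T|\,|S^*|)}\bigr)$ and $\overline{\mathcal{R}((TS)^*)}=V^*\bigl(\overline{\mathcal{R}(|S^*|\,|T|)}\bigr)$ together with the two-out-of-three fact about complemented submodules; all of these steps check out. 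Your approach is in fact very close in spirit to the block-matrix reformulation the authors sketch in Remark~\ref{rem:improvement-1}, and it isolates exactly where the $C^*$-module setting differs from the Hilbert space one. What it does not deliver is the explicit formula \eqref{equ:the polar decomposition of TS} for the partial isometry of $TS$, which the paper relies on later (notably in the proof of Theorem~\ref{thm:Ito-Yamazaki-Yanagida's main result}); so if you adopt your proof you would still need to extract that formula separately.
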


\begin{proof} ``$\Longleftarrow$":\ Suppose  $|T||S^*|$ has the polar decomposition $|T||S^*|=W_1\big||T|\,|S^*|\big|$. We prove that
\begin{equation}\label{equ:the polar decomposition of TS}TS=UW_1V|TS|\end{equation}
is the polar decomposition of $TS$.

First, we show that $X=UW_1V$ is a partial isometry.  By \eqref{equ:two conditions of polar decomposition} and \eqref{equ:the polar decomposition of T star-pre stage}, we have
\begin{eqnarray*}W_1^*W_1&=&P_{\overline{\mathcal{R}(|S^*||T|)}}\le  P_{\overline{\mathcal{R}(S)}}=VV^*,\\
W_1W_1^*&=&P_{\overline{\mathcal{R}(|T||S^*|)}}\le P_{\overline{\mathcal{R}(T^*)}}=U^*U.
\end{eqnarray*}
Therefore, $U^*UW_1=W_1$ and $VV^*W_1^*W_1=W_1^*W_1$. It follows that
\begin{eqnarray*}X^*X=V^*W_1^*W_1V\ \mbox{and}\ (X^*X)^2=X^*X,
\end{eqnarray*}
hence $X^*X$ is a projection, i.e., $X$ is a partial isometry.

Next, we prove that equation~\eqref{equ:the polar decomposition of TS} is true. Note that  $V|S|=S$,  so we have
\begin{eqnarray*}\big||T||S^*|\big|^2=|S^*|\,|T|^2\,|S^*|=V|S|V^*\cdot |T|^2\cdot V|S|V^*=V\cdot |TS|^2\cdot V^*,
\end{eqnarray*}
which means by $V^*V=P_{\overline{\mathcal{R}(S^*)}}$ that
\begin{equation}\label{equ:formula through |TS|}\big||T||S^*|\big|=V\cdot |TS|\cdot V^*,\ \mbox{hence}\ V\cdot |TS|=\big||T||S^*|\big|\cdot V.
\end{equation}
It follows that
\begin{eqnarray*}UW_1V|TS|=U\cdot W_1 \big||T||S^*|\big|\cdot V=U|T|\cdot |S^*|V=TS.
\end{eqnarray*}

Finally, we prove that $X^*X=P_{\overline{\mathcal{R}(|TS|)}}$. In fact, from \eqref{equ:formula through |TS|} we can obtain
$$|TS|=V^*\cdot \big||T||S^*|\big|\cdot V=\big[V^*\big||T||S^*|\big|^\frac12\big]\cdot\big[V^*\big||T||S^*|\big|^\frac12\big]^*,$$
hence by Lemmas~\ref{lem:rang characterization-1} and \ref{lem:Range Closure of Ta and T},  we have
\begin{eqnarray*}\overline{\mathcal{R}(|TS|)}&=&\overline{\mathcal{R}(V^*\big||T||S^*|\big|^\frac12)}=\overline{\mathcal{R}(V^*\big||T||S^*|\big|)}=\overline{\mathcal{R}(V^*W_1^*W_1)}\\
&=&\overline{\mathcal{R}(V^*W_1^*W_1VV^*)}=\overline{\mathcal{R}(V^*W_1^*W_1V)}=\mathcal{R}(X^*X).
\end{eqnarray*}

``$\Longrightarrow$":\ Suppose that $TS$ has the polar decomposition $TS=W_2\big|TS\big|$. We prove that
\begin{equation}\label{equ:the polar decomposition of |T||S^*|}|T| |S^*|=U^*W_2V^*\big||T|S^*|\big|\end{equation}
is the polar decomposition of $|T||S^*|$.

First, we show that $Y=U^*W_2V^*$ is a partial isometry. Indeed,
\begin{eqnarray*}W_2^*W_2&=&P_{\overline{\mathcal{R}(S^*T^*)}}\le  P_{\overline{\mathcal{R}(S^*)}}=V^*V,\\
W_2W_2^*&=&P_{\overline{\mathcal{R}(TS)}}\le P_{\overline{\mathcal{R}(T)}}=UU^*.
\end{eqnarray*}
Therefore,  $Y^*Y$ is a projection as illustrated before.

Next, we prove that equation~\eqref{equ:the polar decomposition of |T||S^*|} is true. Taking $*$-operation, by \eqref{equ:formula through |TS|} we get
\begin{eqnarray*} |TS|V^*=V^*\big||T||S^*|\big|.
\end{eqnarray*}
It follows that
\begin{eqnarray*} U^*W_2V^*\big||T||S^*|\big|&=&U^*\cdot W_2|TS|\cdot V^*
=U^*T\cdot SV^*\\&=&U^*U|T|\cdot|S^*|VV^*=|T||S^*|.
\end{eqnarray*}

Finally, we prove that $Y^*Y=P_{\overline{\mathcal{R}\big(\big||T||S^*|\big|\big)}}$. In fact,
\begin{eqnarray*}\overline{\mathcal{R}\big(\big||T||S^*|\big|\big)}
&=&\overline{\mathcal{R}(V|TS|V^*)}=\overline{\mathcal{R}(V|TS|^{\frac{1}{2}})}
=\overline{\mathcal{R}(V|TS|)}=\overline{\mathcal{R}(VW_2^*W_2)}\\
&=&\overline{\mathcal{R}(VW_2^*W_2V^*V)}=\overline{\mathcal{R}(VW_2^*W_2V^*)}=\mathcal{R}(Y^*Y).\qedhere
\end{eqnarray*}
\end{proof}

\begin{remark}\label{rem:improvement-1}{\rm The preceding lemma was established in \cite[Theorem~2.1]{Ito-Yamazaki-Yanagida} for Hilbert space operators, where only sufficiency was considered. An interpretation of this lemma can be made by using block matrices as follows:

Suppose that $T=U|T|$ and $S=V|S|$ are the polar decompositions. Let $X, Y, Z_1, Z_2\in \mathcal{L}(H\oplus H)$ be defined respectively by
$$X=\left(
                         \begin{array}{cc}
                           T & 0 \\
                           0 & S^* \\
                         \end{array}
                       \right), Y=\left(
                                              \begin{array}{cc}
                                                S & 0 \\
                                                0 & T^* \\
                                              \end{array}
                                            \right), Z_1=\left(
    \begin{array}{cc}
      U^* & 0 \\
      0 & V \\
    \end{array}
  \right)\ \mbox{and}\ Z_2=\left(
                 \begin{array}{cc}
                   V^* & 0 \\
                   0 & U \\
                 \end{array}
               \right).$$
Using equations $U^*T=|T|$ and $SV^*=|S^*|$, we get
\begin{equation*}\label{equ:expression XY-1}XY=\left(
               \begin{array}{cc}
                 TS & 0 \\
                 0 & S^*T^* \\
               \end{array}
             \right)\ \mbox{and}\
Z_1 XY Z_2=\left(
                         \begin{array}{cc}
                           |T|\,|S^*| & 0 \\
                           0 & |S^*|\,|T|\\
                         \end{array}
                       \right). \end{equation*}
It follows from Lemma~\ref{thm:existence condition for polar decomposition} that
$TS$ has the polar decomposition $\Longleftrightarrow$ $\overline{\mathcal{R}(XY)}$ is orthogonally complemented, and
$|T|\,|S^*|$ has the polar decomposition $\Longleftrightarrow$ $\overline{\mathcal{R}(Z_1XYZ_2)}$ is orthogonally complemented.
It is easy to verify that $\overline{\mathcal{R}(XY)}$ is orthogonally complemented $\Longleftrightarrow$ $\overline{\mathcal{R}(Z_1XYZ_2)}$ is orthogonally complemented.
}\end{remark}

\begin{lemma}\label{lem:A and B positive and commutative}  Let $A,B\in\mathcal{L}(H)$ be such that both $\overline{\mathcal{R}(A)}$ and
$\overline{\mathcal{R}(B)}$ are orthogonally complemented. Then the following statements are valid:
\begin{enumerate}
\item[{\rm (i)}] If $A, B\in\mathcal{L}(H)_+$ and $AB=BA$, then $AB=P_{\overline{\mathcal{R}(A)}}P_{\overline{\mathcal{R}(B)}}|AB|$ is the polar decomposition;
\item[{\rm (ii)}] If $A, B\in\mathcal{L}(H)_{sa}$  and there exist two projections $p,q\in\mathcal{L}(H)$ such that $AB=pq|AB|$ is the polar decomposition, then $AB=BA$.
\end{enumerate}
\end{lemma}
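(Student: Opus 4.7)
For part (i), my plan is to directly verify the two conditions characterizing a polar decomposition in Definition~\ref{defn:defn of polar decomposition}. Since $[A,B]=0$ and $A,B\in\mathcal{L}(H)_+$, Lemma~\ref{lem:commutative property extended-1}(ii) gives $AB\in\mathcal{L}(H)_+$, hence $|AB|=AB$. Writing $P=P_{\overline{\mathcal{R}(A)}}$ and $Q=P_{\overline{\mathcal{R}(B)}}$, Lemma~\ref{prop:commutative property extended-4} yields $[P,Q]=[P,B]=[Q,A]=0$, so $U:=PQ=QP$ is a self-adjoint projection, in particular a partial isometry. The identity $U\cdot AB=AB$ then follows from $PA=A$, $QB=B$, and $[Q,A]=0$. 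Finally, to show $U^*U=PQ=P_{\overline{\mathcal{R}(AB)}}$, the inclusion $\overline{\mathcal{R}(AB)}\subseteq\mathcal{R}(PQ)$ is immediate from $U\cdot AB=AB$. For the reverse inclusion, I plan to apply Lemma~\ref{lem:commutative property extended-3}(i) twice to conclude $[A^{1/2},B^{1/2}]=0$, whence $AB=(A^{1/2}B^{1/2})(A^{1/2}B^{1/2})^*$; Lemma~\ref{lem:Range Closure of Ta and T}(i) then identifies $\overline{\mathcal{R}(AB)}=\overline{\mathcal{R}(A^{1/2}B^{1/2})}$. Any $x\in\mathcal{R}(PQ)$ sits in $\overline{\mathcal{R}(A^{1/2})}\cap\overline{\mathcal{R}(B^{1/2})}$ and can be exhibited as a norm limit of elements of $\mathcal{R}(A^{1/2}B^{1/2})$ via a two-stage approximation that commutes $A^{1/2}$ past $Q$.

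For part (ii), the plan is to extract support-projection information from the polar-decomposition hypothesis and then leverage self-adjointness of $A,B$. Definition~\ref{defn:defn of polar decomposition} together with equation~\eqref{equ:the polar decomposition of T star-pre stage} gives $qpq=(pq)^*(pq)=P_{\overline{\mathcal{R}(BA)}}$ and $pqp=(pq)(pq)^*=P_{\overline{\mathcal{R}(AB)}}$. These yield the inclusions $\overline{\mathcal{R}(AB)}\subseteq\mathcal{R}(p)$ and $\overline{\mathcal{R}(BA)}\subseteq\mathcal{R}(q)$, producing the four support identities $pAB=AB=ABq$ and $qBA=BA=BAp$ (the right-hand ones obtained by adjoint, using $A=A^*$, $B=B^*$). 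Taking the adjoint of the polar decomposition also gives $BA=|AB|qp$, and the partial-isometry relation $(pq)^2=pq$ is available. Substituting back and using $qpq\cdot|AB|=|AB|$ yields algebraic identities such as $AB^2A=p|AB|^2 p$ and $BA^2B=|AB|^2$.

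The main obstacle, as I anticipate it, is the last step in (ii): translating these support and projection identities into the commutativity $AB=BA$. A promising route is to apply the symmetric analysis to $BA=qp|BA|$ (whose partial isometry $qp$ is again a product of two projections, by Lemma~\ref{prop:basic property of partial isometry}, Remark~\ref{rem:existence of polar decomposition} and equation~\eqref{equ:the polar decomposition of T star-pre stage}), combine both sets of support identities together with the uniqueness of the polar decomposition, and then use $A=A^*$, $B=B^*$ to identify the two adjoint factorizations and force $AB=BA$.
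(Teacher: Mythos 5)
Your argument for part (i) is essentially the paper's: positivity of $AB$ via Lemma~\ref{lem:commutative property extended-1}(ii), commutativity of $A,B$ with the support projections via Lemma~\ref{prop:commutative property extended-4}, and an identification of $\mathcal{R}(pq)$ with the relevant range closure. Your ``two-stage approximation'' for the reverse range inclusion does work (a diagonal argument commuting $A^{1/2}$ past $Q$), but it is more laborious than necessary: the paper obtains $\overline{\mathcal{R}((AB)^*)}=\overline{\mathcal{R}(BA)}=\overline{\mathcal{R}(Bp)}=\overline{\mathcal{R}(pB)}=\mathcal{R}(pq)$ in one line by applying Lemma~\ref{lem:rang characterization-1} twice, and the same shortcut would close your version with $A^{1/2},B^{1/2}$ replaced by $p,q$.

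Part (ii), however, has a genuine gap, and you have located it yourself: the support identities $pAB=AB=ABq$, $qBA=BA=BAp$ and the relations $AB^2A=p|AB|^2p$, $BA^2B=|AB|^2$ are all correct but do not by themselves force $AB=BA$, and the ``promising route'' through the symmetric factorization $BA=qp|BA|$ and uniqueness of the polar decomposition is a hope rather than an argument. The missing idea is that the hypothesis forces $p$ and $q$ to commute. Since $pq$ is a partial isometry, $qpq=(pq)^*(pq)$ is a projection, so $qpq=(qpq)^2=qpqpq$; expanding $(qpq-qp)(qpq-qp)^*$ shows it vanishes, hence $qp=qpq=(qpq)^*=(qp)^*=pq$. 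Thus $pq$ is itself a projection, and being equal to $(pq)^*(pq)=P_{\overline{\mathcal{R}(|AB|)}}$ it fixes $|AB|$, so $AB=pq|AB|=|AB|\ge 0$. Only at this point does self-adjointness of $A$ and $B$ enter: $AB=(AB)^*=B^*A^*=BA$. Without establishing $pq=qp$, your outline cannot reach the positivity of $AB$, which is what the entire conclusion hinges on.
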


\begin{proof} (i) For simplicity, we put $p=P_{\overline{\mathcal{R}(A)}}$ and $q=P_{\overline{\mathcal{R}(B)}}$. Then $AB\ge 0$ by Lemma~\ref{lem:commutative property extended-1}~(ii) and thus $|AB|=AB$.
Furthermore,  by Lemma~\ref{lem:commutative property extended-3}~(ii) we have
$$[A,q]=[p,B]=[p,q]=0,$$ hence  $pq$ is a projection. By Lemma~\ref{lem:rang characterization-1} we have
\begin{eqnarray*}\overline{\mathcal{R}\big((AB)^*\big)}=\overline{\mathcal{R}(BA)}=\overline{\mathcal{R}(Bp)}=\overline{\mathcal{R}(pB)}=\mathcal{R}(pq),
\end{eqnarray*}
therefore $pq|AB|=pqAB=AB$. Thus, we conclude from \eqref{equ:two conditions of polar decomposition} that
$AB=pq|AB|$ is the polar decomposition of $AB$.

(ii) By assumption, $qpq=(pq)^*pq$ is a projection, hence  $qpq=(qpq)^2=qpqpq$.
It follows that $(qpq-qp)(qpq-qp)^*=0$, and thus
$$qp=qpq=(qpq)^*=(qp)^*=pq,$$
which means that $pq$ is a projection.  So $pq=(pq)^*(pq)=P_{\overline{\mathcal{R}(|AB|)}}$, hence$$AB=pq|AB|=|AB|\ge 0,$$ therefore $AB=(AB)^*=B^*A^*=BA$.
\end{proof}

\begin{remark}\label{rem:improvement-2} The special case of Lemma~\ref{lem:A and B positive and commutative} (ii) above was considered in \cite[Lemma~2.4]{Ito-Yamazaki-Yanagida}, where $H$ is a Hilbert space, $A,B\in\mathcal{L}(H)_+$ are arbitrary, whereas  $p,q$ are restricted to be $P_{\overline{\mathcal{R}(A)}}$ and $q=P_{\overline{\mathcal{R}(B)}}$, respectively.
\end{remark}

We state the main result of this section as follows:
\begin{theorem}\label{thm:Ito-Yamazaki-Yanagida's main result}  Let  $T=U|T|$ and $S=V|S|$ be the polar decompositions of $T,S\in\mathcal{L}(H)$, respectively. Then the following statements are equivalent:
\begin{enumerate}
\item[{\rm (i)}] $|T|\, |S^*|=|S^*|\, |T|$;
\item[{\rm (ii)}] $TS=UV|TS|$ is the polar decomposition;
\item[{\rm (iii)}] $TS=UV|TS|$.
\end{enumerate}
\end{theorem}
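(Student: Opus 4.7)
The plan is to prove the cycle (i) $\Rightarrow$ (ii) $\Rightarrow$ (iii) $\Rightarrow$ (i). The implication (ii) $\Rightarrow$ (iii) is immediate from the statements, so only the other two require work.

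For (i) $\Rightarrow$ (ii), the idea is to apply Lemma~\ref{lem:A and B positive and commutative}(i) to $A=|T|$ and $B=|S^*|$ and then feed the resulting polar decomposition into Lemma~\ref{lem:polar decomposition of product of operator-1}. The hypothesis (i) supplies $[|T|,|S^*|]=0$, while the polar decompositions of $T$ and $S$ together with Lemmas~\ref{thm:existence condition for polar decomposition} and \ref{lem:Range Closure of Ta and T} ensure that $\overline{\mathcal{R}(|T|)}=\overline{\mathcal{R}(T^*)}$ and $\overline{\mathcal{R}(|S^*|)}=\overline{\mathcal{R}(S)}$ are orthogonally complemented. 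Lemma~\ref{lem:A and B positive and commutative}(i) then yields the polar decomposition $|T||S^*|=P_{\overline{\mathcal{R}(T^*)}}P_{\overline{\mathcal{R}(S)}}\big||T||S^*|\big|$, which through the construction in the proof of Lemma~\ref{lem:polar decomposition of product of operator-1} (equation~\eqref{equ:the polar decomposition of TS}) promotes to the polar decomposition $TS=UW_1V|TS|$ with $W_1=P_{\overline{\mathcal{R}(T^*)}}P_{\overline{\mathcal{R}(S)}}$. A brief computation using $U^*U=P_{\overline{\mathcal{R}(T^*)}}$, $VV^*=P_{\overline{\mathcal{R}(S)}}$, and the identity $UU^*U=U$ from Lemma~\ref{prop:basic property of partial isometry} collapses $UW_1V$ to $UV$, which establishes (ii).

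The core difficulty is (iii) $\Rightarrow$ (i). Starting from $TS=UV|TS|$, I would left-multiply by $U^*$ and right-multiply by $V^*$. Using $U^*T=|T|$ and $SV^*=V|S|V^*=|S^*|$ (via the polar decompositions of $T$ and $S$ together with equation~\eqref{eqn:key telationship between two 1/2}) on the left-hand side, and $V|TS|V^*=\big||T||S^*|\big|$ from equation~\eqref{equ:formula through |TS|} in the proof of Lemma~\ref{lem:polar decomposition of product of operator-1} on the right-hand side, the hypothesis reduces to the key identity
\begin{equation*}
|T||S^*|=P_{\overline{\mathcal{R}(T^*)}}\big||T||S^*|\big|. \tag{$\star$}
\end{equation*}
Computing $(|T||S^*|)^*(|T||S^*|)$ in two ways via $(\star)$ then gives $\big||T||S^*|\big|^2=\big||T||S^*|\big|\,P_{\overline{\mathcal{R}(T^*)}}\,\big||T||S^*|\big|$, which rearranges to
\begin{equation*}
\big||T||S^*|\big|\,(I-P_{\overline{\mathcal{R}(T^*)}})\,\big||T||S^*|\big|=0.
\end{equation*}
Since $I-P_{\overline{\mathcal{R}(T^*)}}\ge 0$, taking its square root and using the $C^*$-identity $A^*A=0\Rightarrow A=0$ forces $(I-P_{\overline{\mathcal{R}(T^*)}})\big||T||S^*|\big|=0$, i.e., $P_{\overline{\mathcal{R}(T^*)}}\big||T||S^*|\big|=\big||T||S^*|\big|$. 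Substituting back into $(\star)$ then gives $|T||S^*|=\big||T||S^*|\big|\in\mathcal{L}(H)_+$, and Lemma~\ref{lem:commutative property extended-1}(ii) converts the positivity of the product of two positive operators into their commutativity $[|T|,|S^*|]=0$, which is (i).

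The main obstacle is precisely the passage from $(\star)$ to (i). On its face, $(\star)$ merely factors $|T||S^*|$ as a projection times a positive operator, and the projection $P_{\overline{\mathcal{R}(T^*)}}$ is \emph{not} the partial isometry required by the polar decomposition of $|T||S^*|$ (which would have support $P_{\overline{\mathcal{R}(|S^*||T|)}}$), so Lemma~\ref{lem:A and B positive and commutative}(ii) cannot be applied directly. The crucial trick is to square $(\star)$ against its adjoint to manufacture an equation of the shape $X(I-P)X=0$ with $X\ge 0$, which forces $P$ to act as the identity on $\overline{\mathcal{R}(X)}$; only after this does $(\star)$ collapse to the identification $|T||S^*|=\big||T||S^*|\big|$, at which point the theorem becomes the elementary $C^*$-algebra fact in Lemma~\ref{lem:commutative property extended-1}(ii).
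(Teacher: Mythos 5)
Your proposal is correct and follows essentially the same route as the paper: (i)$\Rightarrow$(ii) via Lemma~\ref{lem:A and B positive and commutative}(i) combined with the construction \eqref{equ:the polar decomposition of TS}, and (iii)$\Rightarrow$(i) by sandwiching with $U^*$ and $V^*$ to reach $Y=U^*U|Y|$, squaring to obtain $|Y|(I-U^*U)|Y|=0$, concluding $Y=|Y|\ge 0$, and invoking Lemma~\ref{lem:commutative property extended-1}(ii). No gaps.
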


\begin{proof}``(i)$\Longrightarrow$(ii)": By Lemma~\ref{lem:A and B positive and commutative}~(i),  $|T|\,|S^*|$ has the polar decomposition as
$$|T|\, |S^*|=(U^*U VV^*)\big| |T|\,|S^*|\big|. $$
Therefore, by \eqref{equ:the polar decomposition of TS} we know that
$$TS=U(U^*UVV^*)V|TS|=UV|TS|$$
is the polar decomposition of $TS$.

``(ii)$\Longrightarrow$(iii)" is obvious.

``(iii)$\Longrightarrow$(i)":  According to \eqref{equ:formula through |TS|}, we have
\begin{equation*}  U|T|\cdot |S^*|V=TS=UV|TS|=U \big||T|\, |S^*|\big|\cdot V,
\end{equation*}
hence
\begin{equation}\label{equ:preparation polar decomposition of product of operator-1}  UYV=U|Y|V,  \ \mbox{where $Y=|T|\, |S^*|$}.
\end{equation}
Note that  $U^*U|T|=|T|$, $|S^*|VV^*=|S^*|$ and $|Y|VV^*=|Y|$, so from \eqref{equ:preparation polar decomposition of product of operator-1} we can obtain
\begin{equation}\label{equ:deduced expression of Y}Y=U^*U Y VV^*=U^*U |Y| VV^*=U^*U |Y|,
\end{equation}
which gives
$|Y|^2=|Y|U^*U |Y|$ and thus $|Y|(I-U^*U)|Y|=0$, therefore $(I-U^*U)|Y|=0$. This together with \eqref{equ:deduced expression of Y}
yields $Y=|Y|\ge 0$, hence $|T|\,|S^*|=|S^*|\,|T|$ by Lemma~\ref{lem:commutative property extended-1}~(ii).
\end{proof}

\begin{remark}\label{rem:improvement-3} The equivalence of Theorem~\ref{thm:Ito-Yamazaki-Yanagida's main result} (i) and (ii) was originally stated in \cite[Theorem~2.3]{Ito-Yamazaki-Yanagida} for Hilbert space operators, whereas item (iii) of this theorem is newly added and is applied to the proof of
Theorem~\ref{thm:equivalent conditions of binormal} below.
\end{remark}

\section{\textbf{$n$-centered operators, centered operators  and binormal operators}}\label{sec:binormal-Aluthge-n centered}
In this section, we  study $n$-centered operators, centered operators and binormal operators on Hilbert $C^*$-modules, respectively. First, we introduce the term of the $n$-centered operator as follows:

\begin{definition}\label{defn:definition of n-centered operator}  Let $T\in\mathcal{L}(H)$ have the polar decomposition $T=U|T|$. Then $T$ is said to be $n$-centered for some $n\in\mathbb{N}$ if
$$\mbox{$T^k=U^k|T^k|$ is the polar decomposition for  $k=1,2,\cdots, n$.}$$
\end{definition}

To study $n$-centered operators, we need a lemma as follows:
\begin{lemma}\label{lem:Ito's 1st technique lemma}{\rm \cite[Lemma~4.2]{Liu-Luo-Xu}}\ Let $T\in\mathcal{L}(H)$ have the polar decomposition $T=U|T|$ and let $n\in\mathbb{N}$ be such that
\begin{equation}\label{equ:key commutative  assumption-00}\big[U^k|T|(U^k)^*, |T^l|\big]=0\quad  \mbox{for any $k,l\in\mathbb{N}$ with $k+l\le n+1$}.
\end{equation}
Then the following statements are equivalent:
\begin{enumerate}
\item[{\rm (i)}] $\big[U^s|T|(U^s)^*, |T^t|\big]=0$ for some $s,t\in\mathbb{N}$ with $s+t=n+2$;
\item[{\rm (ii)}] $\big[U^s|T|(U^s)^*, |T^t|\big]=0$ for any $s,t\in\mathbb{N}$ with $s+t=n+2$.
\end{enumerate}
\end{lemma}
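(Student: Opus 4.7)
The direction (ii) $\Longrightarrow$ (i) is immediate, so I focus on (i) $\Longrightarrow$ (ii). Set $A_k := U^k|T|(U^k)^*$ for brevity, so the hypothesis reads $[A_k,|T^l|]=0$ for all $k,l\in\mathbb{N}$ with $k+l\le n+1$. The pairs $(s,t)\in\mathbb{N}^2$ with $s+t=n+2$ form a finite chain $(1,n+1),(2,n),\ldots,(n+1,1)$, so my plan is to establish, for each $s\ge 2$ with $s+t=n+2$, the adjacent-pair equivalence
\[
\bigl[A_s,|T^t|\bigr]=0 \;\Longleftrightarrow\; \bigl[A_{s-1},|T^{t+1}|\bigr]=0,
\]
and iterate along the chain to obtain (ii).

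The central ingredient is the identity obtained by expanding $|T^{l+1}|^2=T^*|T^l|^2 T$ using $T=U|T|$ and $T^*=|T|U^*$, namely
\[
|T^{l+1}|^2 \;=\; |T|\,U^*|T^l|^2 U\,|T|,
\qquad\text{equivalently,}\qquad U|T^{l+1}|^2 U^* \;=\; A_1|T^l|^2 A_1.
\]
I will combine this with three consequences of the hypothesis and the polar-decomposition setup: by Lemma~\ref{lem:commutative property extended-1}(i), each $[A_k,|T^l|]=0$ with $k+l\le n+1$ upgrades to $[A_k,|T^l|^2]=0$ (so passage between $|T^l|$ and $|T^l|^2$ inside commutators is zero-iff-zero); since $\overline{\mathcal{R}(|T^l|)}\subseteq\overline{\mathcal{R}(T^*)}$ by Lemma~\ref{lem:Range Closure of Ta and T}, one has $U^*U\,|T^l|=|T^l|\,U^*U=|T^l|$; and by Lemma~\ref{prop:commutative property extended-4} the relevant range projections of $|T|$, $|T^l|$, and $A_k$ pairwise commute in the range covered by the hypothesis.

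Fix $(s,t)$ with $s\ge 2$ and $s+t=n+2$. Since $(s-1)+1\le n+1$, the hypothesis gives $[A_{s-1},|T|]=0$, so the outer $|T|$ factors in $|T^{t+1}|^2=|T|\,U^*|T^t|^2 U\,|T|$ commute through $A_{s-1}$, yielding $\bigl[A_{s-1},|T^{t+1}|^2\bigr] = |T|\,\bigl[A_{s-1},U^*|T^t|^2 U\bigr]\,|T|$. The dual computation using $A_s=UA_{s-1}U^*$ and the partial-isometry identities $U^*U|T^t|^2=|T^t|^2=|T^t|^2 U^*U$ rewrites $[A_s,|T^t|^2]$ in terms of the same inner commutator $[A_{s-1},U^*|T^t|^2 U]$, modulo stray projection factors that, by Lemma~\ref{prop:commutative property extended-4}, can be absorbed without affecting whether the commutator vanishes. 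This delivers $[A_s,|T^t|^2]=0 \iff [A_{s-1},|T^{t+1}|^2]=0$, which is the desired adjacent-pair equivalence.

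The main obstacle will be the careful bookkeeping of partial-isometry relations: because $U^*U\neq I$ in general, every manipulation risks introducing stray projections $U^*U$ or $UU^*$, and one must repeatedly invoke $\overline{\mathcal{R}(|T^l|)}\subseteq\overline{\mathcal{R}(T^*)}$ together with Lemmas~\ref{lem:commutative property extended-3} and \ref{prop:commutative property extended-4} to show that these projections do not obstruct the zero-iff-zero passage. Edge cases $s=2$ or $t=1$, where the hypothesis is invoked at its upper limit $k+l=n+1$, warrant extra attention.
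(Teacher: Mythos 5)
The paper does not prove this lemma at all: it is imported verbatim from \cite[Lemma~4.2]{Liu-Luo-Xu}, so there is no in-paper argument to compare against. That said, your outline is a correct reconstruction of the standard argument, and the steps you flag as delicate do go through. Writing $A_k=U^k|T|(U^k)^*$, $X=U^*|T^t|^2U$ and $D=[A_{s-1},X]$, the identity $|T^{t+1}|^2=|T|\,X\,|T|$ together with $[A_{s-1},|T|]=0$ (hypothesis at $k+l=s\le n+1$) gives $[A_{s-1},|T^{t+1}|^2]=|T|\,D\,|T|$. The ``stray projections'' are then controlled by two specific facts: (a) $[A_{s-1},U^*U]=0$, obtained from $[A_{s-1},|T|]=0$ via Lemma~\ref{lem:commutative property extended-3}\,(ii); since $U^*U\,X=X\,U^*U=X$, this yields $U^*U\,D\,U^*U=D$, so $|T|D|T|=0$ does force $D=0$; and (b) $[UU^*,|T^t|]=0$, obtained from the hypothesis at $(k,l)=(1,t)$ (available because $t\le n$ when $s\ge2$) together with Lemma~\ref{prop:commutative property extended-4}; combined with $UU^*A_s=A_sUU^*=A_s$ this identifies $UDU^*$ with $[A_s,|T^t|^2]$ and $U^*[A_s,|T^t|^2]U$ with $D$. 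With the two-way passage between $|T^m|$ and $|T^m|^2$ from Lemma~\ref{lem:commutative property extended-1}\,(i), this gives the adjacent-pair equivalence and hence the lemma. The one point your sketch leaves implicit is (b): you invoke Lemma~\ref{prop:commutative property extended-4} generically, but the commutation $[UU^*,|T^t|]=0$ is precisely what makes $UDU^*=[A_s,|T^t|^2]$ (your displayed identities $U^*U|T^t|^2=|T^t|^2=|T^t|^2U^*U$ concern the wrong projection for this step), and it consumes the hypothesis at level $k+l=t+1$; in a full write-up this should be stated explicitly.
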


A characterization of $n$-centered operators is as follows:
\begin{theorem}\label{thm:no1 result}  Let $T\in\mathcal{L}(H)$  have the polar decomposition $T=U|T|$. Then for each $n\in\mathbb{N}$, the following statements are equivalent:
\begin{enumerate}
\item[{\rm (i)}] $T$ is $(n+1)$-centered;
\item[{\rm (ii)}] $\big[U^k|T|(U^k)^*, |T|\big]=0$\,  for $1\le k\le n$.
\end{enumerate}
\end{theorem}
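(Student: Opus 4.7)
My plan is to proceed by induction on $n$. For the base case $n=1$, the equivalence reads: $T$ is $2$-centered iff $[U|T|U^*,|T|]=0$. This follows at once by applying Theorem~\ref{thm:Ito-Yamazaki-Yanagida's main result} to the product $T^2=T\cdot T$, which says $T^2=U^2|T^2|$ is the polar decomposition iff $|T|\,|T^*|=|T^*|\,|T|$; the identity $|T^*|=U|T|U^*$ from~\eqref{eqn:key telationship between two 1/2} then rewrites this as the desired commutator condition.

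For the inductive step, assume the theorem holds for $n$. Using this hypothesis, it suffices to show: given that $T$ is $(n+1)$-centered, $T$ is $(n+2)$-centered iff $[U^{n+1}|T|(U^{n+1})^*,|T|]=0$. Since $T$ is $(n+1)$-centered, both factors in the splitting $T^{n+2}=T^{n+1}\cdot T$ admit polar decompositions $T^{n+1}=U^{n+1}|T^{n+1}|$ and $T=U|T|$. Applying Theorem~\ref{thm:Ito-Yamazaki-Yanagida's main result} to this product, together with $|T^*|=U|T|U^*$, shows that $T^{n+2}=U^{n+2}|T^{n+2}|$ is the polar decomposition iff $[U|T|U^*,|T^{n+1}|]=0$.

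Now both $[U|T|U^*,|T^{n+1}|]=0$ and the target $[U^{n+1}|T|(U^{n+1})^*,|T|]=0$ are commutators of the form $[U^s|T|(U^s)^*,|T^t|]=0$ with $s+t=n+2$, so Lemma~\ref{lem:Ito's 1st technique lemma} yields the desired equivalence provided its standing hypothesis---that $[U^k|T|(U^k)^*,|T^l|]=0$ for every $k,l\in\mathbb{N}$ with $k+l\le n+1$---has been verified under the $(n+1)$-centered assumption.

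Establishing this standing hypothesis is the principal obstacle, and I would handle it by a nested induction on the level $m=k+l$. For $m=2$ one has $[U|T|U^*,|T|]=0$, which holds because $(n+1)$-centered implies $2$-centered and the base case of the outer induction applies. To pass from level $m$ to level $m+1\le n+1$, one uses Theorem~\ref{thm:Ito-Yamazaki-Yanagida's main result} on a convenient splitting $T^{m+1}=T^a\cdot T^{m+1-a}$ (each factor already possessing its polar decomposition by the $(n+1)$-centered hypothesis) to produce a single commutator at level $m+1$; the inductively available lower-level commutators then let Lemma~\ref{lem:Ito's 1st technique lemma} spread commutativity across all pairs $(s,t)$ with $s+t=m+1$. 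The reverse direction of the outer induction is entirely symmetric: starting from $[U^k|T|(U^k)^*,|T|]=0$ for $1\le k\le n+1$, one uses Lemma~\ref{lem:Ito's 1st technique lemma} to translate the case $k=n+1$ into $[U|T|U^*,|T^{n+1}|]=0$, and then applies Theorem~\ref{thm:Ito-Yamazaki-Yanagida's main result} in the reverse direction to conclude that $T^{n+2}=U^{n+2}|T^{n+2}|$ is the polar decomposition.
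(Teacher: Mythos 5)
Your proposal is correct and takes essentially the same route as the paper: both apply Theorem~\ref{thm:Ito-Yamazaki-Yanagida's main result} to the splittings $T^{k+1}=T^{k}\cdot T$ to obtain the commutators $\big[U|T|U^*,|T^k|\big]=0$, and then use Lemma~\ref{lem:Ito's 1st technique lemma} level by level in $k+l$ to pass between these and the conditions $\big[U^k|T|(U^k)^*,|T|\big]=0$. Your explicit double induction simply unpacks what the paper compresses into the phrase ``by repeating use of Lemma~\ref{lem:Ito's 1st technique lemma} for $k=1,2,\cdots,n$.''
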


\begin{proof}``(i)$\Longrightarrow$(ii)": Suppose that $T$ is $(n+1)$-centered. Then by Definition~\ref{defn:definition of n-centered operator} $T^k\cdot T=U^k\cdot U|T^k\cdot T|$ is the polar decomposition of $T^k\cdot T$ for $1\leq k\leq n$. Then the equivalence of Theorem~\ref{thm:Ito-Yamazaki-Yanagida's main result} (i) and (ii) indicates that
\begin{equation}\label{equ:equivalent condition of n centered operator-1}\big[U|T|U^*, |T^k|\big]=\big[|T^*|, |T^k|\big]=0\quad \mbox{for $1\leq k\leq n$},
\end{equation}
which is equivalent to item (ii) of this theorem by repeating use Lemma~\ref{lem:Ito's 1st technique lemma} for $k=1,2,\cdots,n$.

``(ii)$\Longrightarrow$(i)": Suppose that  $\big[U^k|T|(U^k)^*, |T|\big]=0$ for $1\le k\le n$. Then \eqref{equ:equivalent condition of n centered operator-1}
is satisfied by repeating use Lemma~\ref{lem:Ito's 1st technique lemma}. Therefore, $T$ is an $n$-centered operator by Theorem~\ref{thm:Ito-Yamazaki-Yanagida's main result}.
\end{proof}

\begin{definition}\cite{Morrel}\ An element $T\in\mathcal{L}(H)$ is said to be centered if the following sequence
$$\cdots, T^3(T^3)^*, T^2(T^2)^*,TT^*,T^*T,(T^2)^*T^2,(T^3)^*T^3,\cdots$$
consists of mutually commuting operators.
\end{definition}

A direct application of Theorem~\ref{thm:no1 result} and \cite[Theorem~4.6]{Liu-Luo-Xu} yields the following corollary:
\begin{corollary}\label{thm:another result for centered operator} Let $T\in\mathcal{L}(H)$ have the polar decomposition $T=U|T|$. Then the following  statements are equivalent:
\begin{enumerate}
\item[{\rm (i)}] $T$ is a centered operator;
\item[{\rm (ii)}] For each $k\in\mathbb{N}$, $T^k=U^k|T^k|$ is the polar decomposition;
\item[{\rm (iii)}]For each $n\in\mathbb{N}$, $T$ is an $n$-centered operator.
\end{enumerate}
\end{corollary}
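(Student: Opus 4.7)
The plan is short, matching the status of the statement as a corollary. First, the equivalence (ii)$\,\Leftrightarrow\,$(iii) is essentially a tautology: by Definition~\ref{defn:definition of n-centered operator}, asserting that $T$ is $n$-centered for every $n\in\mathbb{N}$ is exactly the assertion that $T^k=U^k|T^k|$ is a polar decomposition for every $k\in\mathbb{N}$, so no real work is required in this direction.

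For (i)$\,\Leftrightarrow\,$(iii), I would funnel both conditions through the single family of commutator relations
\[
\big[U^k|T|(U^k)^*,\, |T|\big]=0\qquad(k\ge 1).
\]
On one side, Theorem~\ref{thm:no1 result} tells us that $T$ is $(n+1)$-centered precisely when these commutators vanish for $1\le k\le n$; letting $n$ range over $\mathbb{N}$ yields that (iii) holds if and only if the whole family vanishes. On the other side, one invokes \cite[Theorem~4.6]{Liu-Luo-Xu}, which characterizes centeredness of $T$ by exactly the same family of commutator identities. Chaining the two equivalences delivers (i)$\,\Leftrightarrow\,$(iii).

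The only genuine obstacle is verifying that the original definition of centeredness---mutual commutativity of the two-sided sequence $\cdots,\,T^2(T^2)^*,\,TT^*,\,T^*T,\,(T^2)^*T^2,\cdots$---really matches the above commutator family. This matching is precisely the content of the imported \cite[Theorem~4.6]{Liu-Luo-Xu}: one uses the intertwining identities from~\eqref{eqn:key telationship between two 1/2}, iterated to rewrite $U^k|T^k|$ as $|(T^k)^*|U^k$, together with Lemma~\ref{lem:commutative property extended-1}(i) and Lemma~\ref{prop:commutative property extended-4}, to pass between commutativity of the positive factors $|T^k|^2$ and $U^k|T^k|^2(U^k)^*$ appearing in Morrel--Muhly's sequence and commutativity of their square roots (and range projections) appearing in the commutator family above. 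Once that translation is accepted from the quoted reference, no further ideas are needed, which is consistent with the corollary being advertised as a \emph{direct} consequence of Theorem~\ref{thm:no1 result} and \cite[Theorem~4.6]{Liu-Luo-Xu}.
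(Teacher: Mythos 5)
Your proposal matches the paper's intended argument: the paper offers no written proof beyond declaring the corollary ``a direct application of Theorem~\ref{thm:no1 result} and \cite[Theorem~4.6]{Liu-Luo-Xu}'', and your route --- (ii)$\Leftrightarrow$(iii) by Definition~\ref{defn:definition of n-centered operator}, then funnelling (i) and (iii) through the commutator family $\big[U^k|T|(U^k)^*,|T|\big]=0$ via Theorem~\ref{thm:no1 result} on one side and the quoted characterization of centered operators on the other --- is exactly that application. The proof is correct and takes essentially the same approach.
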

\begin{remark} The implication (i)$\Longrightarrow$(ii) of the preceding corollary was first given by Morrel and Muhly in \cite[Theorem~I]{Morrel} for Hilbert space operators. The equivalence of (i) and (ii) was  later proved in \cite[Theorem~3.2]{Ito-Yamazaki-Yanagida}  for Hilbert space operators.
\end{remark}

Next, we introduce the notations of $P_n(T)$ and $P_n(T^*)$ as follows:
\begin{definition} Let $T\in\mathcal{L}(H)$ have the polar decomposition $T=U|T|$. For each $n\in\mathbb{N}$, let $P_n(T)$ and $P_n(T^*)$ be defined by
\begin{equation}\label{equ:defn of P n T T-sta}P_n(T)=U^n(U^n)^*\ \mbox{and}\ P_n(T^*)=(U^n)^*U^n.
\end{equation}
\end{definition}
It is mentionable that if $T^n=U^n|T^n|$ is the polar decomposition, then both $P_n(T)$ and $P_n(T^*)$ are projections. With the notations of \eqref{equ:defn of P n T T-sta}, a characterization of the partial isometry can be given as follows:

\begin{lemma}\label{lem:isometries for n to n+1-1} Let $T\in\mathcal{L}(H)$ have the polar decomposition $T=U|T|$ and let
$n\in\mathbb{N}$ be such that $U^n$ is a partial isometry.  Then the following statements are equivalent:
\begin{enumerate}
\item[{\rm (i)}]$\big[P_n(T^*),UU^*]=0$;
\item[{\rm (ii)}]$U^{n+1}$ is a partial isometry;
\item[{\rm (iii)}]$\big[P_n(T),U^*U]=0$,
\end{enumerate} where $P_n(T)$ and $P_n(T^*)$ are defined by \eqref{equ:defn of P n T T-sta}.
\end{lemma}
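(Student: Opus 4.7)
The plan is to reduce the three-way equivalence to the single computation (i)$\Leftrightarrow$(ii), and then recover (iii)$\Leftrightarrow$(ii) by applying the same fact to $T^*$. Set $p=U^*U$, $q=UU^*$, $e_n=P_n(T^*)$, and $f_n=P_n(T)$; under the hypotheses and Lemma~\ref{prop:basic property of partial isometry}, all four of $p,q,e_n,f_n$ are projections.

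For (i)$\Leftrightarrow$(ii), I would use the factorization $U^{n+1}=U\cdot U^n$ to compute
$$(U^{n+1})^*U^{n+1}=U^*(U^n)^*U^n U=U^* e_n U,$$
so that (ii) is equivalent to $U^* e_n U$ being a projection. The core step is then the following general claim, which I would prove directly: for any partial isometry $U$ and any projection $e$, the element $U^* e U$ is a projection if and only if $[e,UU^*]=0$. The direction ``$\Leftarrow$'' is immediate from $U^* q=U^*$ (a consequence of $UU^*U=U$). For ``$\Rightarrow$'', expanding $(U^* e U)^2=U^* e U$ and using $e^2=e$ gives $U^* e(1-q)e U=0$, i.e.\ $\bigl((1-q)eU\bigr)^*\bigl((1-q)eU\bigr)=0$, so the $C^*$-axiom forces $(1-q)eU=0$. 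Multiplying on the right by $U^*$ gives $eq=qeq$; taking adjoints and comparing yields $eq=qe$, which is exactly (i).

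For (iii)$\Leftrightarrow$(ii), I would appeal to Remark~\ref{rem:existence of polar decomposition}: $T^*$ admits the polar decomposition $T^*=U^*|T^*|$, and $(U^*)^n=(U^n)^*$ is a partial isometry by Lemma~\ref{prop:basic property of partial isometry}, so the hypotheses of the present lemma also hold for $T^*$. Under this substitution, $P_n(T)$ and $P_n(T^*)$ swap roles, $UU^*$ is replaced by $U^*U$, and $(U^*)^{n+1}$ is a partial isometry precisely when $U^{n+1}$ is. Hence (i) applied to $T^*$ reads $[P_n(T),U^*U]=0$, namely (iii), while (ii) for $T^*$ coincides with (ii) for $T$. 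The only mild hurdle is careful bookkeeping of the swap between source and range projections in this symmetry argument; no substantive new calculation is needed.
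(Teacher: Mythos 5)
Your proposal is correct and follows essentially the same route as the paper: both reduce (i)$\Leftrightarrow$(ii) to checking when $(U^{n+1})^*U^{n+1}=U^*P_n(T^*)U$ is idempotent, and both obtain (iii) by applying this equivalence to the polar decomposition $T^*=U^*|T^*|$ with $(U^*)^n$ a partial isometry. The only cosmetic difference is in (ii)$\Rightarrow$(i): you extract $(1-q)eU=0$ directly from $\bigl((1-q)eU\bigr)^*\bigl((1-q)eU\bigr)=0$, whereas the paper first conjugates to get $qpq=qpqpq$ and then reuses the computation from Lemma~\ref{lem:A and B positive and commutative}~(ii); both are the same $C^*$-positivity argument.
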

\begin{proof} ``(i)$\Longrightarrow$ (ii)":\  Assume that $\big[P_n(T^*),UU^*]=0$. Then
\begin{align*}\big(P_{n+1}(T^*)\big)^2&=U^*\cdot P_n(T^*) UU^*\cdot P_n(T^*)U\\
&=U^*\cdot UU^*P_n(T^*)\cdot P_n(T^*)U\\
&=U^* P_n(T^*)U=P_{n+1}(T^*),
\end{align*}
therefore $P_{n+1}(T^*)$ is a projection, hence $U^{n+1}$ is a partial isometry.

``(ii)$\Longrightarrow$ (i)":\  Assume that $P_{n+1}(T^*)$ is a projection. For simplicity, we put
$p=P_n(T^*)$ and $q=UU^*$. Then the equation $P_{n+1}(T^*)=\big(P_{n+1}(T^*)\big)^2$ turns out to be
$U^*pU=U^*pqpU$. Therefore,
$$qpq=UU^*\cdot p\cdot UU^*=U\cdot U^*pqpU\cdot U^*=qpqpq,$$
which leads to $[p,q]=0$ as illustrated in the proof of Lemma~\ref{lem:A and B positive and commutative}~(ii).
The proof of (i)$\Longleftrightarrow$(ii) is then finished.

``(ii)$\Longleftrightarrow$ (iii)":\quad Since $U^n$ is a partial isometry, we know  from Lemma~\ref{prop:basic property of partial isometry} that
$(U^*)^n=(U^n)^*$ is also a partial isometry.  Note also that the polar decomposition of $T^*$ is given by \eqref{equ:the polar decomposition of T star-pre stage}, so if we replace the pair $(T,U)$ with $(T^*, U^*)$, then
\begin{eqnarray*}U^{n+1} \ \mbox{is a partial isometry}&\Longleftrightarrow & (U^*)^{n+1}\ \mbox{is a partial isometry}\\
&\Longleftrightarrow& \big[P_n(T),U^*U]=0\  \mbox{by (i)$\Longleftrightarrow$(ii)}.\qedhere
\end{eqnarray*}
\end{proof}

In what follows of this section, we study binormal operators. To this end, a lemma is stated as follows:

\begin{lemma}\label{lem:relationship between |T| and |T-star| alpha} {\rm \cite[Lemma~3.12]{Liu-Luo-Xu}}  Let $T\in\mathcal{L}(H)$ have the polar decomposition $T=U|T|$. Then for any $\alpha>0$, the following statements are valid:
\begin{enumerate}
\item[{\rm (i)}] $U|T|^\alpha U^*=(U|T|U^*)^\alpha=|T^*|^\alpha$;
\item[{\rm (ii)}] $U|T|^\alpha=|T^*|^\alpha U$;
\item[{\rm (iii)}] $U^*|T^*|^\alpha U=(U^*|T^*|U)^\alpha=|T|^\alpha$.
\end{enumerate}
\end{lemma}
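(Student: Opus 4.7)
The plan is to reduce everything to the base identity $|T^*|=U|T|U^*$ from equation~\eqref{eqn:key telationship between two 1/2} by inserting the projection $U^*U$ freely between powers of $|T|$. The key preparatory fact is that $U^*U=P_{\overline{\mathcal{R}(T^*)}}=P_{\overline{\mathcal{R}(|T|)}}$ (combining Definition~\ref{defn:defn of polar decomposition} with Lemma~\ref{lem:Range Closure of Ta and T}~(i)) and that $\overline{\mathcal{R}(|T|^\alpha)}=\overline{\mathcal{R}(|T|)}$ for every $\alpha>0$ by Lemma~\ref{lem:Range Closure of Ta and T}~(ii). Hence $(U^*U)|T|^\alpha=|T|^\alpha=|T|^\alpha(U^*U)$ for all $\alpha>0$, which is the single algebraic fact driving everything that follows.

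For (i), I would first treat the case $\alpha=n\in\mathbb{N}$: a direct induction using the insertion identity $U|T|^jU^*\cdot U|T|^kU^*=U|T|^j(U^*U)|T|^kU^*=U|T|^{j+k}U^*$ yields $(U|T|U^*)^n=U|T|^nU^*=|T^*|^n$. For arbitrary $\alpha>0$, I would first confirm positivity: $U|T|^\alpha U^*=U|T|^{\alpha/2}(U^*U)|T|^{\alpha/2}U^*=(U|T|^{\alpha/2})(U|T|^{\alpha/2})^*\ge 0$. The identity $(U|T|^\alpha U^*)^n=U|T|^{n\alpha}U^*$ (proved the same way) together with uniqueness of positive $n$-th roots in the $C^*$-algebra $\mathcal{L}(H)$ then handles rationals $\alpha=p/q$, since both $U|T|^{p/q}U^*$ and $|T^*|^{p/q}$ are positive elements whose $q$-th powers agree with $U|T|^pU^*=|T^*|^p$. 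Passage to arbitrary real $\alpha>0$ follows from norm-continuity of $\alpha\mapsto A^\alpha$ on the positive cone. An alternative, cleaner route is to verify that $\Phi(f):=Uf(|T|)U^*$ defines a $*$-homomorphism on $\{f\in C([0,\|T\|]):f(0)=0\}$ using the same insertion identity, so that $\Phi$ coincides with the continuous functional calculus of $\Phi(\mathrm{id})=|T^*|$ on this ideal; taking $f(t)=t^\alpha$ then yields $U|T|^\alpha U^*=|T^*|^\alpha$ at once.

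Parts (ii) and (iii) should drop out of (i) by one more application of the same insertion. For (ii), $|T^*|^\alpha U=(U|T|^\alpha U^*)U=U|T|^\alpha(U^*U)=U|T|^\alpha$. For (iii), $U^*|T^*|^\alpha U=U^*(U|T|^\alpha U^*)U=(U^*U)|T|^\alpha(U^*U)=|T|^\alpha$, and the middle equality $(U^*|T^*|U)^\alpha=|T|^\alpha$ reduces to $U^*|T^*|U=|T|$, which is the $\alpha=1$ instance already computed. The main obstacle is the step from integer (or rational) exponents to arbitrary real $\alpha>0$: the purely algebraic manipulation with $U^*U$ handles all other content, but this final extension genuinely requires either the norm-continuity argument or, more elegantly, the $*$-homomorphism framework sketched above.
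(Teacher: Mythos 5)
Your proof is correct. Note that the paper itself gives no proof of this lemma: it is imported verbatim from \cite[Lemma~3.12]{Liu-Luo-Xu}, so there is nothing internal to compare against. Your argument is the standard one and is sound at every step: the single identity $(U^*U)|T|^\alpha=|T|^\alpha=|T|^\alpha(U^*U)$ (justified correctly via $U^*U=P_{\overline{\mathcal{R}(T^*)}}$ and Lemma~\ref{lem:Range Closure of Ta and T}) does all the algebraic work, the passage from integer to rational exponents via uniqueness of positive $q$-th roots is valid, and the final extension to real $\alpha>0$ by norm-continuity of $\alpha\mapsto A^{\alpha}$ (or, more cleanly, by your $*$-homomorphism $\Phi(f)=Uf(|T|)U^*$ on $\{f\in C([0,\Vert T\Vert]):f(0)=0\}$) closes the only genuine gap. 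Parts (ii) and (iii) then follow from (i) exactly as you say.
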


\begin{definition}\label{defn:binormal operator} \cite{Campbell-1}\ An element $T$ of $\mathcal{L}(H)$ is said to be binormal if $T^*T$ and $TT^*$ are commutative, that is, $[T^*T,TT^*]=0$.  \end{definition}

\begin{remark}\label{rem:remarks on binormality}
Binormal operators are also known as weakly centered operators in \cite{Paulsen}. It follows from  Lemma~\ref{lem:commutative property extended-3}~(i)  that
\begin{equation}\label{equ:characterization of binormal-1}\mbox{$T$ is binormal}\Longleftrightarrow
\big[|T|^\alpha, |T^*|^\beta\big]=0,\ \forall\,\alpha>0, \forall\,\beta>0.\end{equation}
Note that in Definition~\ref{defn:binormal operator}  there is no demanding on the existence of the polar decomposition,
in the case that $T$ has the polar decomposition $T=U|T|$, then
$|T^*|=U|T|U^*$, hence  Theorem~\ref{thm:no1 result} indicates that
\begin{equation}\label{equ:characterization of binormal-2}\mbox{$T$ is binormal}\Longleftrightarrow  T \ \mbox{is $2$-centered}.\end{equation}
\end{remark}

Now, suppose that $T$ is binormal and meanwhile $T$ has the polar decomposition $T=U|T|$. Then by \eqref{equ:characterization of binormal-1},
Lemma~\ref{lem:Range Closure of Ta and T}~(ii) and Lemma~\ref{prop:commutative property extended-4}, we have
\begin{equation}\label{equ:basic commmatativities derived from binormal condition}\big[|T|^{\alpha}, UU^*\big]=\big[U^*U, |T^*|^{\beta}\big]=\big[U^*U, UU^*\big]=0, \ \forall\,\alpha>0, \forall\,\beta>0.
\end{equation}
Moreover,  we can prove that
\begin{eqnarray}\label{equ:commutative related binormal-1}\big[U|T|^\alpha U^*,|T|^\beta\big]=\big[U^*|T|^{\alpha}U,|T|^{\beta}\big]=0, \ \forall\,\alpha>0, \forall\,\beta>0.
\end{eqnarray}
In fact, from Lemma~\ref{lem:relationship between |T| and |T-star| alpha}~(i) and \eqref{equ:characterization of binormal-1}, we can  get
\begin{align*}\big[U|T|^\alpha U^*,|T|^\beta\big]=\big[|T^*|^\alpha,|T|^\beta\big]=0,
\end{align*}
which gives
$\big[U|T|^\beta U^*,|T|^\alpha\big]=0$ by exchanging $\alpha$ and $\beta$.
It follows that
\begin{align*}U^*|T|^{\alpha}U\cdot |T|^{\beta}&=U^*(|T|^{\alpha}\cdot U|T|^{\beta}U^*)U=U^*(U|T|^{\beta}U^*\cdot |T|^{\alpha})U\\
&=|T|^{\beta}\cdot U^*|T|^{\alpha}U.
\end{align*}

\begin{definition}\label{rem:some results of widetilde U} Suppose that $T\in\mathcal{L}(H)$  has the polar decomposition $T=U|T|$. Let
\begin{equation}\label{equ:defn of widetilde U}\widetilde{U}=U^*U^2 \ \mbox{and}\ T_{\alpha, \beta}=|T|^{\alpha} U |T|^{\beta}\ \mbox{for any  $\alpha>0, \beta>0$}.
\end{equation}
\end{definition}
\begin{remark} Much attention has been paid to the case of $\alpha=\beta=\frac12$ in the literatures, where $T_{\frac12,\frac12}$ is known as the Aluthge transformation of $T$ \cite{Aluthge}.

Let $P_k(T)$ and $P_k(T^*)$ be defined by \eqref{equ:defn of P n T T-sta} for any $k\in\mathbb{N}$. Direct computation yields that for each $n$ in $\mathbb{N}$,
\begin{align}\label{equ:formulas for n-th power of widetilde U}&\widetilde{U}^n=U^*U^{n+1}, \big(\widetilde{U}^n\big)^*\widetilde{U}^n=P_{n+1}(T^*),
\widetilde{U}^n(\widetilde{U}^n)^*=U^*U\cdot P_n(T)\cdot U^*U.
\end{align}
Therefore,
\begin{align*}\mbox{$\widetilde{U}$ is a partial isometry}&\Longleftrightarrow P_2(T^*)\ \mbox{is a projection}\\
&\Longleftrightarrow\ \mbox{$U^2$ is a partial isometry}\\
&\Longleftrightarrow\ [UU^*, U^*U]=0\ \mbox{by Lemma~\ref{lem:isometries for n to n+1-1}}.
\end{align*}
In particular, if $T$ is binormal, then $\widetilde{U}$ is a partial isometry.
\end{remark}

We give some characterizations of the binormality as follows:

\begin{theorem}\label{thm:equivalent conditions of binormal} Suppose that $T\in\mathcal{L}(H)$ has the polar decomposition $T=U|T|$. Let $\widetilde{U}$ and $T_{\alpha, \beta}$ be defined by \eqref{equ:defn of widetilde U} for $\alpha>0$ and $\beta>0$. Then the following statements are equivalent:
\begin{enumerate}
\item[{\rm (i)}] $T^2=U^2|T^2|$ is the polar decomposition;
\item[{\rm (ii)}] $T$ is binormal;
\item[{\rm (iii)}]
$\forall\, \alpha>0, \forall\, \beta>0$, $T_{\alpha, \beta}=\widetilde{U}\,|T_{\alpha, \beta}|$ is the polar decomposition;
\item[{\rm (iv)}]
$\forall\, \alpha>0, \forall\, \beta>0$, $T_{\alpha, \beta}=\widetilde{U}\,|T_{\alpha, \beta}|$;
\item[{\rm (v)}] There exist $\alpha>0$ and $\beta>0$ such that $T_{\alpha, \beta}=\widetilde{U}\,|T_{\alpha, \beta}|$.
\end{enumerate}
In each case for any $\alpha>0$ and $\beta>0$,
\begin{align}\label{eqn:formula for square root of widetilde T-alpha-beta}|T_{\alpha, \beta}|&=U^*|T|^{\alpha} U\cdot |T|^{\beta}=|T|^{\beta}\cdot U^*|T|^{\alpha} U,\\
\label{eqn:formula for square root of widetilde T-alpha-beta-star} |T_{\alpha, \beta}^*|&=|T|^{\alpha}\cdot |T^*|^{\beta}=|T^*|^{\beta}\cdot |T|^{\alpha}.
\end{align}
\end{theorem}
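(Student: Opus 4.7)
The plan is to reduce every equivalence to Theorem~\ref{thm:Ito-Yamazaki-Yanagida's main result}. For (i)$\Longleftrightarrow$(ii), I would apply that theorem to the product $T\cdot T$ with both factors having polar decomposition $T = U|T|$: it gives that $T^{2} = U^{2}|T^{2}|$ is the polar decomposition iff $|T|\cdot|T^{*}| = |T^{*}|\cdot|T|$, and by Lemma~\ref{lem:commutative property extended-1}(i) with $f(x) = x^{2}$ this latter condition is equivalent to $[T^{*}T, TT^{*}] = 0$, i.e., the binormality of $T$.

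For the remaining equivalences, I would factor $T_{\alpha,\beta} = A\cdot B$ with $A = |T|^{\alpha}$ and $B = U|T|^{\beta}$, and apply Theorem~\ref{thm:Ito-Yamazaki-Yanagida's main result} to this product. Since $A$ is positive, its polar decomposition is $A = (U^{*}U)\cdot |T|^{\alpha}$, with partial isometry $P_{\overline{\mathcal{R}(T^{*})}} = U^{*}U$. For $B$, direct computation combined with Lemma~\ref{lem:relationship between |T| and |T-star| alpha}(i) yields $B^{*}B = |T|^{2\beta}$ and $BB^{*} = U|T|^{2\beta}U^{*} = |T^{*}|^{2\beta}$, so $|B| = |T|^{\beta}$, $|B^{*}| = |T^{*}|^{\beta}$, and $B = U\cdot|T|^{\beta}$ satisfies the two defining conditions~\eqref{equ:two conditions of polar decomposition} of the polar decomposition. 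Consequently, Theorem~\ref{thm:Ito-Yamazaki-Yanagida's main result}'s canonical partial isometry for the product $AB$ is $(U^{*}U)\cdot U = U^{*}U^{2} = \widetilde{U}$, and its commutativity condition $|A|\cdot|B^{*}| = |B^{*}|\cdot|A|$ reads $[|T|^{\alpha}, |T^{*}|^{\beta}] = 0$.

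With this reduction in hand, Theorem~\ref{thm:Ito-Yamazaki-Yanagida's main result} immediately gives, for each fixed pair $(\alpha,\beta)$, the equivalence of the three statements ``$T_{\alpha,\beta} = \widetilde{U}|T_{\alpha,\beta}|$ is the polar decomposition,'' ``$T_{\alpha,\beta} = \widetilde{U}|T_{\alpha,\beta}|$,'' and $[|T|^{\alpha}, |T^{*}|^{\beta}] = 0$. Quantifying over all $\alpha,\beta > 0$ and using~\eqref{equ:characterization of binormal-1} produces (ii)$\Longleftrightarrow$(iii)$\Longleftrightarrow$(iv); the implications (iii)$\Longrightarrow$(iv)$\Longrightarrow$(v) are trivial, and for (v)$\Longrightarrow$(ii) I would apply Lemma~\ref{lem:commutative property extended-1}(i) twice (first with $f(x) = x^{1/\alpha}$, then with $f(x) = x^{1/\beta}$) to pass from $[|T|^{\alpha}, |T^{*}|^{\beta}] = 0$ for a single pair to $[|T|, |T^{*}|] = 0$, hence binormality.

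Finally, for the explicit formulas: once (iii) is in force, multiplying $T_{\alpha,\beta} = \widetilde{U}|T_{\alpha,\beta}|$ on the left by $\widetilde{U}^{*}$ and using $U^{*}U|T|^{\alpha} = |T|^{\alpha}$ simplifies $\widetilde{U}^{*}T_{\alpha,\beta}$ to $U^{*}|T|^{\alpha}U\cdot|T|^{\beta}$, giving~\eqref{eqn:formula for square root of widetilde T-alpha-beta}; the second expression follows from the commutativity~\eqref{equ:commutative related binormal-1}. For~\eqref{eqn:formula for square root of widetilde T-alpha-beta-star}, I would compute $|T_{\alpha,\beta}^{*}|^{2} = T_{\alpha,\beta}T_{\alpha,\beta}^{*} = |T|^{\alpha}|T^{*}|^{2\beta}|T|^{\alpha}$ via Lemma~\ref{lem:relationship between |T| and |T-star| alpha}(i), collapse this to $(|T|^{\alpha}|T^{*}|^{\beta})^{2}$ using binormality, and take positive square roots. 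The main obstacle I anticipate is producing the polar decomposition of $B = U|T|^{\beta}$ carefully enough that Theorem~\ref{thm:Ito-Yamazaki-Yanagida's main result}'s canonical partial isometry for $AB$ comes out to exactly $\widetilde{U}$; once that identification is secured, the rest is a routine chase through Theorem~\ref{thm:Ito-Yamazaki-Yanagida's main result} and the functional-calculus lemma.
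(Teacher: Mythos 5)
Your proposal is correct and follows essentially the same route as the paper: the same factorization $T_{\alpha,\beta}=|T|^{\alpha}\cdot U|T|^{\beta}$ fed into Theorem~\ref{thm:Ito-Yamazaki-Yanagida's main result}, the same functional-calculus step for (v)$\Longrightarrow$(ii), and the same computations for \eqref{eqn:formula for square root of widetilde T-alpha-beta} and \eqref{eqn:formula for square root of widetilde T-alpha-beta-star}. The only cosmetic differences are that the paper obtains (i)$\Longleftrightarrow$(ii) by citing the already-recorded equivalence with $2$-centeredness in \eqref{equ:characterization of binormal-2} rather than re-applying Theorem~\ref{thm:Ito-Yamazaki-Yanagida's main result} to $T\cdot T$, and it gets \eqref{eqn:formula for square root of widetilde T-alpha-beta} by simplifying $\left(T_{\alpha,\beta}^{*}T_{\alpha,\beta}\right)^{1/2}$ directly instead of left-multiplying by $\widetilde{U}^{*}$.
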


\begin{proof}``(i)$\Longleftrightarrow$(ii)":\ Item (i) of this theorem is true $\Longleftrightarrow$ $T$ is $2$-centered$\Longleftrightarrow$ $T$ is binormal by \eqref{equ:characterization of binormal-2}.

``(ii)$\Longrightarrow$(iii)":\ Given any $\alpha>0$ and $\beta>0$, put
$$A=|T|^{\alpha} \ \mbox{and}\ B=U|T|^{\beta}.$$ Since $|A|=A=|T|^{\alpha}$, by \eqref{equ:two conditions of polar decomposition} and Lemma~\ref{lem:Range Closure of Ta and T} we know that
\begin{equation}\label{equ:polar decomposition of A} A=U^*U|A|\ \mbox{is the polar decomposition}.
\end{equation}
Also, $B^*B=|T|^{\beta}U^*U|T|^{\beta}=|T|^{2\beta}$, which gives $|B|=|T|^{\beta}$. Therefore
\begin{equation}\label{equ:polar decomposition of B} B=U|B|\ \mbox{is the polar decomposition}.
\end{equation}
By Lemma~\ref{lem:relationship between |T| and |T-star| alpha} (i), we have
$$\mbox{$BB^*=U|T|^{2\beta}U^*=(U|T|U^*)^{2\beta}=|T^*|^{2\beta}$},$$
and thus $|B^*|=|T^*|^{\beta}$. It follows that
$$\big[|A|, |B^*|\big]=\big[|T|^{\alpha}, |T^*|^{\beta}\big]=0\ \mbox{by \eqref{equ:characterization of binormal-1}}.$$
Therefore, by Theorem~\ref{thm:Ito-Yamazaki-Yanagida's main result} we conclude that
\begin{align*}T_{\alpha,\beta}=AB=(U^*U)U|AB|=\widetilde{U}\,|T_{\alpha, \beta}|\ \mbox{is the polar decomposition}.
\end{align*}

 ``(iii)$\Longrightarrow$(iv)$\Longrightarrow$(v)" is clear.

``(v)$\Longrightarrow$(ii)":\ Let $\alpha>0$ and $\beta>0$ be such that $T_{\alpha, \beta}=\widetilde{U}\,|T_{\alpha, \beta}|$. Then
from the proof of (ii)$\Longrightarrow$(iii) above and the implication (iii)$\Longrightarrow$(i) of Theorem~\ref{thm:Ito-Yamazaki-Yanagida's main result}, we conclude that $\big[|T|^{\alpha}, |T^*|^{\beta}\big]=0,$
which in turn leads to $\big[|T|, |T^*|\big]=0$ by Lemma~\ref{lem:commutative property extended-3}~(i) since
$|T|=\left(|T|^{\alpha}\right)^\frac{1}{\alpha}$ and $|T^*|=\left(|T^*|^{\beta}\right)^\frac{1}{\beta}$. Therefore, $T$ is binormal.
This completes the proof of the equivalence of (i)--(v).

Assume now that $T$ is binormal. Then by \eqref{equ:basic commmatativities derived from binormal condition},
\begin{equation*}|T|^\alpha U |T|^\beta=|T|^\alpha (UU^*)\cdot U |T|^\beta=(UU^*)|T|^\alpha \cdot U |T|^\beta,
\end{equation*}
which is combined with \eqref{equ:defn of widetilde U} and \eqref{equ:commutative related binormal-1} to get
\begin{equation*}|T_{\alpha, \beta}|=\left(|T|^{\beta} U^* |T|^{\alpha}\cdot UU^* |T|^{\alpha} U |T|^{\beta}\right)^\frac12=U^*|T|^{\alpha} U\cdot |T|^{\beta}=|T|^{\beta}\cdot U^*|T|^{\alpha} U.
\end{equation*}
Finally, from \eqref{equ:defn of widetilde U} and Lemma~\ref{lem:relationship between |T| and |T-star| alpha}~(i) we obtain
\begin{equation*}|T_{\alpha, \beta}^*|=\left(|T|^{\alpha} U |T|^{\beta}U^*\cdot U|T|^{\beta} U^*|T|^{\alpha}\right)^\frac12=|T|^{\alpha}\cdot U|T|^{\beta} U^*=|T|^{\alpha}\cdot |T^*|^{\beta}. \qedhere
\end{equation*}
\end{proof}

\begin{remark}\label{rem:improvement-4} Putting $\alpha=\beta=\frac12$ in Theorem~\ref{thm:equivalent conditions of binormal}, the binormality of an operator can be characterized in terms of its Aluthge transformation as did in \cite{Ito}.
\end{remark}

\section{\textbf{The Moore-Penrose inverse of $n$-centered operators}}\label{sec:M-P inverse of $n$-centered operator}

In this section, we study the Moore-Penrose inverse of $n$-centered operators in the general setting of Hilbert $C^*$-modules.

\begin{lemma}\label{lem:orthogonal} {\rm (cf.\,\cite[Theorem 3.2]{Lance} and \cite[Remark 1.1]{Xu-Sheng})}\ Let $T\in \mathcal{L}(H,K)$. Then the closedness of any one of the following sets
implies the closedness of the remaining three sets:
\begin{equation}\label{equ:four ranges} \mathcal{R}(T), \mathcal{R}(T^*),  \mathcal{R}(TT^*) \ \mbox{and}\  \mathcal{R}(T^*T).\end{equation}
If $\mathcal{R}(T)$ is closed, then $\mathcal{R}(T)=\mathcal{R}(TT^*)$ and
$\mathcal{R}(T^*)=\mathcal{R}(T^*T)$.
\end{lemma}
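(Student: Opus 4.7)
The plan is to prove the four closedness properties are mutually equivalent via two sub-equivalences: (a) $\mathcal{R}(T)$ is closed iff $\mathcal{R}(T^*)$ is closed, and (b) $\mathcal{R}(T)$ is closed iff $\mathcal{R}(TT^*)$ is closed. Applying (b) with $T$ replaced by $T^*$ then gives $\mathcal{R}(T^*)$ closed iff $\mathcal{R}(T^*T)$ closed, so (a) and (b) together link all four ranges. For (a), I would simply invoke the closed-range theorem for adjointable operators on Hilbert $C^*$-modules from \cite[Theorem~3.2]{Lance}, which is already cited at the head of the statement.

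The easy half of (b) is ``$\mathcal{R}(TT^*)$ closed $\Rightarrow$ $\mathcal{R}(T)$ closed.'' By Lemma~\ref{lem:Range Closure of Ta and T}(i) one has $\overline{\mathcal{R}(TT^*)}=\overline{\mathcal{R}(T)}$, so combining with the trivial inclusion $\mathcal{R}(TT^*)\subseteq\mathcal{R}(T)$ yields the chain
$$\mathcal{R}(TT^*)\subseteq\mathcal{R}(T)\subseteq\overline{\mathcal{R}(T)}=\overline{\mathcal{R}(TT^*)}=\mathcal{R}(TT^*),$$
which forces $\mathcal{R}(T)=\mathcal{R}(TT^*)$; running the same argument with $T^*$ handles the pair $\mathcal{R}(T^*),\mathcal{R}(T^*T)$. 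For the other half, suppose $\mathcal{R}(T)$ is closed. I would then invoke the standard fact (recorded in \cite[Remark~1.1]{Xu-Sheng}) that a closed-range adjointable operator on a Hilbert $C^*$-module admits a Moore--Penrose inverse $T^{\dagger}$, and rewrite
$$T=TT^{\dagger}T=T(T^{\dagger}T)^*=TT^*(T^{\dagger})^*,$$
which gives $\mathcal{R}(T)\subseteq\mathcal{R}(TT^*)$; the trivial reverse inclusion then delivers $\mathcal{R}(T)=\mathcal{R}(TT^*)$. Replacing $T$ by $T^*$ yields $\mathcal{R}(T^*)=\mathcal{R}(T^*T)$, which is exactly the ``moreover'' assertion of the lemma.

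The main obstacle is the implication ``closed range implies Moore--Penrose invertible'' in the Hilbert $C^*$-module setting, which does not follow from abstract Banach space arguments since closed submodules of a Hilbert $C^*$-module need not be orthogonally complemented. However, this is well-established in the Hilbert $C^*$-module literature and is essentially what the Xu--Sheng reference in the statement records, so the entire plan reduces to citing that fact together with Lance's closed-range theorem and chaining through the range-closure identity of Lemma~\ref{lem:Range Closure of Ta and T}(i).
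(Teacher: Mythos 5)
Your argument is correct. The paper offers no proof of this lemma at all---it is imported directly from \cite[Theorem 3.2]{Lance} and \cite[Remark 1.1]{Xu-Sheng}---and your reconstruction rests on exactly those two facts (the closed-range theorem for adjointable operators and the existence of $T^\dagger$ when $\mathcal{R}(T)$ is closed), with the remaining chaining through $\overline{\mathcal{R}(TT^*)}=\overline{\mathcal{R}(T)}$ and the identity $T=T(T^\dagger T)^*=TT^*(T^\dagger)^*$ filled in correctly; the only cosmetic remark is that one could bypass the Moore--Penrose inverse entirely by using the decomposition $H=\mathcal{N}(T)\dotplus\mathcal{R}(T^*)$ from Lance's theorem to get $\mathcal{R}(T)=T(\mathcal{R}(T^*))=\mathcal{R}(TT^*)$ directly.
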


\begin{definition}\cite{Xu-Sheng}
The Moore-Penrose inverse $T^\dag$ of $T\in \mathcal{L}(H,K)$ (if it exists)
is the unique element $X\in \mathcal{L}(K,H)$ which satisfies
\begin{equation} \label{equ:m-p inverse} TXT=T, \ XTX=X, \ (TX)^*=TX \  \mbox{and}\ (XT)^*=XT.\end{equation}
\end{definition}

 An operator $T\in \mathcal{L}(H,K)$ is called  M-P invertible if $T^\dag$ exists, which is exactly the case that $\mathcal{R}(T)$ is closed \cite[Theorem~2.2]{Xu-Sheng}; or equivalently by Lemma~\ref{lem:orthogonal}, one of
the four ranges in \eqref{equ:four ranges} is closed. When $T$ is M-P invertible, we know from \cite[Section~1]{Xu} that
\begin{equation}\label{equ:masic property of T dag-1}(T^\dag)^*=(T^*)^\dag, (TT^*)^\dag=(T^*)^\dag T^\dag, \mathcal{R}(T^\dag)=\mathcal{R}(T^*) \ \mbox{and}\ \mathcal{N}(T^\dag)=\mathcal{N}(T^*).
\end{equation}
If furthermore $T\ge 0$, then $T^\dag\ge 0$ and $(T^\dag)^\frac12=(T^\frac12)^\dag$, since
\begin{align}\label{equ:masic property of T dag-3}\big(T^\frac12\big)^*=T^\frac12\ \mbox{and}\ T^\dag=\big(T^\frac12\cdot T^\frac12\big)^\dag=(T^\frac12)^\dag \cdot (T^\frac12)^\dag.
\end{align}

Based on observations above,  two auxiliary lemmas are provided as follows:

\begin{lemma}\label{lem:trivial prop-0}Let $T\in\mathcal{L}(H)_{sa}$ and $S\in\mathcal{L}(H)$ be such that $T$ is M-P invertible and
$TS=ST$. Then $T^\dag S=ST^\dag$.
\end{lemma}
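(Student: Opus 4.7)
The plan is to reduce the problem to showing that the self-adjoint projection $P := TT^\dag = T^\dag T$ (onto $\mathcal{R}(T)$) commutes with $S$. Once $[S,P]=0$ is in hand, the desired identity $T^\dag S = S T^\dag$ will follow by a short algebraic manipulation that exploits the familiar relations $PT^\dag = T^\dag P = T^\dag$, $PT = TP = T$, and $T^\dag T = P$. Note that since $T=T^*$ we have $(T^\dag)^*=(T^*)^\dag=T^\dag$, which justifies $TT^\dag = T^\dag T = P$.

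To prove $[S,P]=0$, first observe that $[S,T]=0$ at once implies $[S,T^2]=0$. Since $T$ is self-adjoint and M-P invertible, $\mathcal{R}(T)$ is closed, and Lemma~\ref{lem:orthogonal} gives $\mathcal{R}(T^2)=\mathcal{R}(TT^*)=\mathcal{R}(T)$, so in particular $\overline{\mathcal{R}(T^2)}=\mathcal{R}(T)$. Moreover, this submodule is orthogonally complemented, since the projection $P=TT^\dag$ already lies in $\mathcal{L}(H)$. Thus $T^2\in\mathcal{L}(H)_+$ fulfils both hypotheses of Lemma~\ref{lem:commutative property extended-3}(ii), which yields
\[
[S,P]=\bigl[S,P_{\overline{\mathcal{R}(T^2)}}\bigr]=0.
\]

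For the second stage, I would simply compute
\[
T\cdot ST^\dag=(TS)T^\dag=(ST)T^\dag=S(TT^\dag)=SP=PS=(TT^\dag)S=T\cdot T^\dag S,
\]
so $T\,[S,T^\dag]=0$. Left-multiplying by $T^\dag$ gives $P\,[S,T^\dag]=0$. On the other hand, using $PT^\dag=T^\dag$ together with $[S,P]=0$,
\[
P\,[S,T^\dag]=PST^\dag-PT^\dag S=(SP)T^\dag-T^\dag S=ST^\dag-T^\dag S=[S,T^\dag],
\]
which forces $[S,T^\dag]=0$, as required.

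The main obstacle is the first step: matching $P$ with $P_{\overline{\mathcal{R}(T^2)}}$ so that the module-theoretic commutation lemma (Lemma~\ref{lem:commutative property extended-3}(ii)) becomes applicable to the positive operator $T^2$. This is where M-P invertibility is truly used, both to close the range of $T$ (hence of $T^2$, via Lemma~\ref{lem:orthogonal}) and to supply the projection $TT^\dag$ that witnesses orthogonal complementation. Once $[S,P]=0$ is secured, everything else is routine manipulation of the Moore--Penrose identities.
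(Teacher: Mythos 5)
Your proof is correct, and it takes a genuinely different route from the paper's: the paper does not write out an argument at all but only remarks that ``a proof can be given by using the block matrix forms of $T$ and $S$'', i.e., one decomposes $H=\mathcal{R}(T)\dotplus\mathcal{N}(T)$ (legitimate because $T=T^*$ has closed range), writes $T=\mathrm{diag}(T_1,0)$ with $T_1$ invertible on $\mathcal{R}(T)$, deduces from $TS=ST$ that the off-diagonal corners of $S$ vanish and that $[S_{11},T_1]=0$, and then commutes $S_{11}$ with $T_1^{-1}$ to conclude, since $T^\dag=\mathrm{diag}(T_1^{-1},0)$. You instead argue coordinate-free: you pass from $[S,T]=0$ to $[S,T^2]=0$, apply Lemma~\ref{lem:commutative property extended-3}(ii) to the \emph{positive} operator $T^2$ --- the detour through $T^2$ is exactly the right move, since $T$ is only self-adjoint and that lemma requires positivity --- and correctly identify $P_{\overline{\mathcal{R}(T^2)}}$ with $P=TT^\dag=T^\dag T$ via Lemma~\ref{lem:orthogonal} and the uniqueness of the projection onto a complemented submodule; the closing computation $T[S,T^\dag]=0$, hence $P[S,T^\dag]=0$, combined with $P[S,T^\dag]=[S,T^\dag]$, is airtight. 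What the block-matrix sketch buys is brevity and transparency once the $2\times 2$ representation is set up; what your argument buys is that it stays entirely inside the commutation machinery the paper has already assembled (Lemmas~\ref{lem:commutative property extended-3} and \ref{lem:orthogonal} plus the defining Moore--Penrose identities) and never needs to represent $S$ in block form. Both are valid proofs of the lemma.
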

\begin{proof}A proof can be given by using the block matrix forms of $T$ and $S$.
\end{proof}
\begin{lemma}\label{lem:trivial prop-1} Let $T\in\mathcal{L}(H,K)$ be M-P invertible. Then
\begin{equation}\label{equ:relationship between M-P inverses}|T|^\dag=|(T^\dag)^*|\ \mbox{and}\ |T^*|^\dag=|T^\dag|.\end{equation}
\end{lemma}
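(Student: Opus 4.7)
The plan is to reduce both equalities to two algebraic identities already recorded in the paper: the dagger-adjoint identity $(TT^*)^\dag = (T^*)^\dag T^\dag$ from \eqref{equ:masic property of T dag-1}, together with the square-root-dagger identity $(A^\dag)^{1/2} = (A^{1/2})^\dag$ for $A \ge 0$ from \eqref{equ:masic property of T dag-3}. First I would observe that since $T$ is M-P invertible, Lemma~\ref{lem:orthogonal} guarantees that $T^*$, $T^*T$, and $TT^*$ all have closed range, so $T^*$, $|T|$, and $|T^*|$ are themselves M-P invertible, which legitimizes the manipulations below.

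To establish $|T|^\dag = |(T^\dag)^*|$, I would apply the identity $(A^\dag)^{1/2} = (A^{1/2})^\dag$ to $A = T^*T \ge 0$, obtaining $|T|^\dag = \bigl((T^*T)^\dag\bigr)^{1/2}$. Next I would compute $(T^*T)^\dag$ by applying $(SS^*)^\dag = (S^*)^\dag S^\dag$ with $S = T^*$, together with $(T^*)^\dag = (T^\dag)^*$ from \eqref{equ:masic property of T dag-1}; this yields $(T^*T)^\dag = T^\dag (T^\dag)^*$. The right-hand side is exactly $|(T^\dag)^*|^2$ by the definition of the modulus applied to the operator $(T^\dag)^*$, and taking the positive square root gives $|T|^\dag = |(T^\dag)^*|$.

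The second identity is then obtained by substituting $T^*$ for $T$ in the first one (which is permissible since $T^*$ is also M-P invertible): doing so produces $|T^*|^\dag = |((T^*)^\dag)^*|$, and two applications of $(T^*)^\dag = (T^\dag)^*$ collapse the right-hand side to $|T^\dag|$. I do not anticipate any real obstacle here: the content is essentially a careful bookkeeping of adjoints and daggers using tools already established in the paper, with the only subtle point being the preliminary verification that each of the operators $|T|$, $|T^*|$, $T^\dag$, and $(T^\dag)^*$ is M-P invertible, so that all the square-root-dagger identities are being applied in a regime where they are valid.
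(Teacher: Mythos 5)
Your proposal is correct and follows essentially the same route as the paper: both compute $|T|^\dag=\big((T^*T)^\dag\big)^{1/2}=\big(T^\dag(T^\dag)^*\big)^{1/2}=|(T^\dag)^*|$ using \eqref{equ:masic property of T dag-1} and \eqref{equ:masic property of T dag-3}, and then obtain the second identity by replacing $T$ with $T^*$. Your added remark on the closedness of the ranges of $T^*$, $T^*T$ and $TT^*$ is a harmless (and welcome) explicit justification of what the paper leaves implicit.
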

\begin{proof} By \eqref{equ:masic property of T dag-1}--\eqref{equ:masic property of T dag-3}, we have
\begin{eqnarray*}|T|^\dag=\big((T^*T)^\frac12\big)^\dag=\big((T^*T)^\dag\big)^\frac12=\big(T^\dag (T^*)^\dag\big)^\frac12=\big(T^\dag (T^\dag)^*\big)^\frac12=|(T^\dag)^*|.
\end{eqnarray*}
Replacing $T$ with $T^*$, we obtain
$|T^*|^\dag=|\big((T^*)^\dag\big)^*|=|\big((T^*)^*\big)^\dag|=|T^\dag|.$
\end{proof}

The polar decomposition for the Moore-Penrose inverse of Hilbert space operators can be found in \cite[Proposition~2.2]{Jabbarzadeh-Bakhshkandi}. The same is also true for adjointable operators described as follows:
\begin{lemma}\label{lem:polar decomposition of T dag} Let $T\in\mathcal{L}(H)$ be M-P invertible and have the polar decomposition $T=U|T|$. Then
$T^\dag$ has the polar decomposition $T^\dag=U^*|T^\dag|$.
\end{lemma}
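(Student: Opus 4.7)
I will verify directly that $U^{*}$ together with $|T^{\dag}|$ fulfils the two defining conditions of the polar decomposition stated in \eqref{equ:two conditions of polar decomposition}, namely that $U^{*}$ is a partial isometry with $(U^{*})^{*}U^{*}=P_{\overline{\mathcal{R}((T^{\dag})^{*})}}$ and that the factorization $T^{\dag}=U^{*}|T^{\dag}|$ holds. The main ingredients will be Lemma~\ref{prop:basic property of partial isometry}, Lemma~\ref{lem:trivial prop-1}, the identities \eqref{equ:the polar decomposition of T star-pre stage} and \eqref{eqn:key telationship between two 1/2} attached to the polar decomposition of $T$, and the fact (Lemma~\ref{lem:orthogonal}) that M-P invertibility makes all the relevant ranges closed.

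\emph{Step 1 (identifying the range projection).} Lemma~\ref{prop:basic property of partial isometry} says $U^{*}$ is a partial isometry, so $(U^{*})^{*}U^{*}=UU^{*}$. Because $T$ is M-P invertible, $\mathcal{R}(T)$ is closed by Lemma~\ref{lem:orthogonal}; applying \eqref{equ:masic property of T dag-1} to $T^{*}$ in place of $T$ one reads off
$\mathcal{R}((T^{\dag})^{*})=\mathcal{R}((T^{*})^{\dag})=\mathcal{R}((T^{*})^{*})=\mathcal{R}(T)$. Comparing with \eqref{equ:the polar decomposition of T star-pre stage} then yields $(U^{*})^{*}U^{*}=UU^{*}=P_{\overline{\mathcal{R}(T)}}=P_{\overline{\mathcal{R}((T^{\dag})^{*})}}$, which is precisely what is required.

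\emph{Step 2 (the factorization).} The most efficient route is to show that $X:=U^{*}|T^{\dag}|$ satisfies the four Moore-Penrose identities \eqref{equ:m-p inverse} with respect to $T$ and invoke uniqueness of $T^{\dag}$. Rewrite $|T^{\dag}|=|T^{*}|^{\dag}$ via Lemma~\ref{lem:trivial prop-1} and keep in mind the identities $|T^{*}|=U|T|U^{*}$ and $|T^{*}|\,U=U|T|$ from \eqref{eqn:key telationship between two 1/2}. A short computation then gives
\[
TX = U|T|\cdot U^{*}|T^{*}|^{\dag} = |T^{*}|\,|T^{*}|^{\dag} = P_{\overline{\mathcal{R}(T)}},
\]
and, using $|T^{*}|^{\dag}\cdot U|T|=|T^{*}|^{\dag}|T^{*}|\,U=P_{\overline{\mathcal{R}(T)}}U=UU^{*}U=U$,
\[
XT = U^{*}|T^{*}|^{\dag}\,U|T| = U^{*}U = P_{\overline{\mathcal{R}(T^{*})}}.
\]
Both $TX$ and $XT$ are self-adjoint projections, and $TXT=P_{\overline{\mathcal{R}(T)}}T=T$ and $XTX=P_{\overline{\mathcal{R}(T^{*})}}X=X$ follow at once, so $X=T^{\dag}$ by the uniqueness part of \eqref{equ:m-p inverse}.

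I do not anticipate a real obstacle: the M-P invertibility of $T$ automatically makes $\mathcal{R}(T)$ and $\mathcal{R}(T^{*})$ closed, which is what lets the projection $|T^{*}|\,|T^{*}|^{\dag}$ coincide with $P_{\overline{\mathcal{R}(T)}}$ and puts all the commuting projections that appear in the computation into the correct position. The only place where a little care is needed is in passing between $|T^{\dag}|$ and $|T^{*}|^{\dag}$, and this is exactly what Lemma~\ref{lem:trivial prop-1} handles for us.
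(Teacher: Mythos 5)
Your proposal is correct and follows essentially the same route as the paper: both verify that $X=U^{*}|T^{*}|^{\dag}$ satisfies the four Moore--Penrose equations \eqref{equ:m-p inverse}, conclude $X=T^{\dag}$, convert $|T^{*}|^{\dag}$ to $|T^{\dag}|$ via Lemma~\ref{lem:trivial prop-1}, and then check $(U^{*})^{*}U^{*}=UU^{*}=P_{\overline{\mathcal{R}((T^{\dag})^{*})}}$. The only difference is cosmetic: you identify this projection through $\mathcal{R}((T^{\dag})^{*})=\mathcal{R}(T)$, while the paper reads it off from $UU^{*}=TT^{\dag}=(T^{\dag})^{*}T^{*}$.
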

\begin{proof} Put $X=U^*|T^*|^\dag$. Then the expression of $T=|T^*|U$ gives
\begin{align*}TX&=|T^*|\,|T^*|^\dag\in\mathcal{L}(H)_{sa},\ TXT=T,\\
XT&=U^*|T^*|^\dag |T^*|U\in\mathcal{L}(H)_{sa},\ XTX=X.
\end{align*}
Therefore, $T^\dag=X$ by  \eqref{equ:m-p inverse} and thus $T^\dag=U^*|T^\dag|$ by \eqref{equ:relationship between M-P inverses}.

Since $\mathcal{R}(T)$ is closed, we have
\begin{equation}\label{equ:P1 T T star when Moore-Penrose inverbile}U^*U=T^\dag T\ \mbox{and}\ UU^*=TT^\dag=(T^\dag)^*T^*,
\end{equation}
so $(U^*)^*U^*=UU^*=P_{\mathcal{R}((T^\dag)^*)}$. The conclusion then follows from \eqref{equ:two conditions of polar decomposition}.
\end{proof}

To derive the main results of this section, we need a lemma as follows:
\begin{lemma}\label{lem:property of n-centered-strong} Let $T\in\mathcal{L}(H)$ have the  polar decomposition $T=U|T|$, and let $n\in\mathbb{N}$ be such that $T$ is $(n+1)$-centered. Then
\begin{equation}\big[P_k(T), |T|\big]=[P_k(T^*), |T^*|\big]=0\ \,\mbox{for $1\leq k\leq n$},
\end{equation}where $P_k(T)$ and $P_k(T^*)$ are defined by \eqref{equ:defn of P n T T-sta}.
\end{lemma}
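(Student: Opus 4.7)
\bigskip

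\noindent\textbf{Proof proposal.} The plan is to exploit the symmetry between $T$ and $T^*$, transfer the $(n+1)$-centered hypothesis into commutators of positive square roots, and then promote those commutators to their support projections via Lemma~\ref{prop:commutative property extended-4}.

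First, I would show that $T^*$ is itself $(n+1)$-centered. By Remark~\ref{rem:existence of polar decomposition}, for each $1\le k\le n+1$ the polar decomposition $T^k=U^k|T^k|$ yields the polar decomposition $(T^k)^*=(U^k)^*|(T^k)^*|$. Since $(T^k)^*=(T^*)^k$ and $(U^k)^*=(U^*)^k$, this is exactly the statement that $(T^*)^k=(U^*)^k|(T^*)^k|$ is the polar decomposition of $(T^*)^k$ built from the polar decomposition $T^*=U^*|T^*|$, so $T^*$ satisfies Definition~\ref{defn:definition of n-centered operator} with parameter $n+1$.

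Second, I would invoke the identity derived inside the proof of Theorem~\ref{thm:no1 result} (equation (4.1) of that proof), which states that whenever $T$ is $(n+1)$-centered,
\[
\big[|T^*|,\,|T^k|\big]=0\qquad\text{for }1\le k\le n.
\]
Applying the very same statement to $T^*$ in place of $T$ (legitimate by the previous paragraph, with partial isometry $U^*$ and positive part $|T^*|$), I obtain
\[
\big[|T|,\,|(T^k)^*|\big]=0\qquad\text{for }1\le k\le n.
\]

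Third, I would pass from these operator commutators to the desired projection commutators. Because $T^k$ has the polar decomposition, Lemma~\ref{thm:existence condition for polar decomposition} guarantees that $\overline{\mathcal{R}(T^k)}$ and $\overline{\mathcal{R}((T^k)^*)}$ are orthogonally complemented, and Lemma~\ref{lem:Range Closure of Ta and T}(i) together with the defining identities of the polar decomposition give
\begin{align*}
P_{\overline{\mathcal{R}(|T^k|)}}&=P_{\overline{\mathcal{R}((T^k)^*)}}=(U^k)^*U^k=P_k(T^*),\\
P_{\overline{\mathcal{R}(|(T^k)^*|)}}&=P_{\overline{\mathcal{R}(T^k)}}=U^k(U^k)^*=P_k(T).
\end{align*}
The ranges of $|T|$ and $|T^*|$ are likewise orthogonally complemented (again by Lemma~\ref{lem:Range Closure of Ta and T}(i) and Lemma~\ref{thm:existence condition for polar decomposition}). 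Applying Lemma~\ref{prop:commutative property extended-4} to the positive pair $(|T^*|,|T^k|)$ produces $\big[|T^*|,P_k(T^*)\big]=0$, and applying it to $(|T|,|(T^k)^*|)$ produces $\big[|T|,P_k(T)\big]=0$, for every $1\le k\le n$, which is the claim.

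The only subtle step is the first one: verifying that the $(n+1)$-centered property is self-dual. This ultimately reduces to the observation $(U^k)^*=(U^*)^k$, which is routine, but it is the key structural fact that lets the whole symmetry argument go through; everything else is a bookkeeping assembly of Lemmas~\ref{lem:Range Closure of Ta and T}, \ref{thm:existence condition for polar decomposition} and \ref{prop:commutative property extended-4}.
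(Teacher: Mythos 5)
Your proposal is correct, and it reaches the conclusion by a genuinely different route than the paper. You reduce everything to the commutator identity $\big[|T^*|,|T^k|\big]=0$ for $1\le k\le n$ (equation \eqref{equ:equivalent condition of n centered operator-1}, obtained from the $(n+1)$-centered hypothesis via Theorem~\ref{thm:Ito-Yamazaki-Yanagida's main result}), apply it also to $T^*$ (your self-duality observation, which is indeed justified by Remark~\ref{rem:existence of polar decomposition} and $(U^k)^*=(U^*)^k$), and then upgrade the commuting positive pairs $(|T^*|,|T^k|)$ and $(|T|,|(T^k)^*|)$ to commuting range projections via Lemma~\ref{prop:commutative property extended-4}, after identifying $P_{\overline{\mathcal{R}(|T^k|)}}=P_k(T^*)$ and $P_{\overline{\mathcal{R}(|(T^k)^*|)}}=P_k(T)$. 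All hypotheses check out: the relevant range closures are orthogonally complemented because each $T^k$ ($1\le k\le n+1$) has a polar decomposition. The paper instead argues directly at the level of the partial isometries: from $\overline{\mathcal{R}(T^{k+1})}=\mathcal{R}(U^{k+1})$ it derives $P_{k+1}(T)U|T|U^k=U|T|U^k$, uses $[P_k(T),U^*U]=0$ (Lemma~\ref{lem:isometries for n to n+1-1}) to conclude $|T|U^k=P_k(T)|T|U^k$, and multiplies by $(U^k)^*$; this is more computational but self-contained and never invokes the product-decomposition Theorem~\ref{thm:Ito-Yamazaki-Yanagida's main result}. Your version is conceptually shorter and makes the mechanism transparent (commuting positive operators have commuting support projections), at the mild presentational cost of citing a displayed identity from inside the proof of Theorem~\ref{thm:no1 result} rather than a stated result; if you wanted to make it free-standing you would simply rerun the two-line derivation of \eqref{equ:equivalent condition of n centered operator-1} from Theorem~\ref{thm:Ito-Yamazaki-Yanagida's main result} (ii)$\Longrightarrow$(i).
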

 \begin{proof}Given any $k$ with $1\leq k\leq n$. By Definition~\ref{defn:definition of n-centered operator}  and \eqref{equ:the polar decomposition of T star-pre stage}, both $U^{k}$ and $U^{k+1}$ are partial isometries such that
 \begin{eqnarray*} \overline{\mathcal{R}(T^{k+1})}=\mathcal{R}(U^{k+1})\ \mbox{and}\ \overline{\mathcal{R}(T^k)}=\mathcal{R}(U^k),
\end{eqnarray*}
which leads to
\begin{align*}P_{k+1}(T)U|T|\cdot T^k=P_{k+1}(T)T^{k+1}=T^{k+1}=U|T|\cdot T^k.
\end{align*}
It follows that
\begin{align*}P_{k+1}(T)U|T|\cdot U^k=U|T|\cdot U^k.
 \end{align*}
Note that $[P_k(T),U^*U]=0$ by Lemma~\ref{lem:isometries for n to n+1-1}, so the equation above gives
\begin{align*}|T|U^k&=U^*\cdot U|T|U^k=U^*\cdot P_{k+1}(T)U|T|U^k=U^*U\cdot P_k(T)U^*U|T|U^k\\
&= P_k(T)\cdot U^*U\cdot U^*U|T|U^k=P_k(T)|T|U^k.
\end{align*}
Multiplying $(U^k)^*$ from the right side, we obtain
$$|T|P_k(T)=P_k(T)|T|P_k(T)=\big(P_k(T)|T|P_k(T)\big)^*=P_k(T)|T|.$$
Therefore, $\big[P_k(T), |T|\big]=0$.
Replacing the pair $(U,T)$ with $(U^*, T^*)$, we get
$$[P_k(T^*), |T^*|\big]=0\ \mbox{for $1\leq k\leq n$}.\qedhere$$
\end{proof}

Now we provide the technical result of this section as follows:

\begin{lemma}\label{lem:no3-result} Let $T\in\mathcal{L}(H)$ be M-P invertible and have the polar decomposition $T=U|T|$. Let $n\in\mathbb{N}$ be such that $T$ is $n$-centered. Then for $1\le k\le n$, $T^k$ is M-P invertible and
\begin{equation}\label{equ: Moore-Penrose inverse of T of power k}(T^k)^\dag=(T^\dag)^k\ \mbox{for all $k=1,2,\cdots,n$}.\end{equation}
\end{lemma}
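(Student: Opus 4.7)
The plan is induction on $k$. The base case $k=1$ is immediate, so assume the assertion for some $1\le k<n$; in particular $T^k$ is M-P invertible with $(T^k)^\dag=(T^\dag)^k$, and applying Lemma~\ref{lem:polar decomposition of T dag} to $T^k$ yields the two range projections $T^k(T^\dag)^k=U^k(U^k)^*=P_k(T)$ and $(T^\dag)^kT^k=(U^k)^*U^k=P_k(T^*)$. I will verify the four Moore-Penrose conditions for $X=(T^\dag)^{k+1}$ as the M-P inverse of $T^{k+1}$.

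The central computation is
\begin{equation*}
T^{k+1}(T^\dag)^{k+1}=T\cdot P_k(T)\cdot T^\dag=U\,|T|\,P_k(T)\,U^*|T^\dag|.
\end{equation*}
Because $T$ is $n$-centered and $k\le n-1$, Lemma~\ref{lem:property of n-centered-strong} supplies $[P_k(T),|T|]=0$; combining this with the intertwiner $|T|U^*=U^*|T^*|$ and the identity $|T^*|\,|T^\dag|=|T^*|\,|T^*|^\dag=P_{\overline{\mathcal{R}(T)}}=UU^*$, the expression collapses to
\begin{equation*}
T^{k+1}(T^\dag)^{k+1}=U\,P_k(T)\,U^*\cdot UU^*=P_{k+1}(T)\cdot P_1(T)=P_{k+1}(T),
\end{equation*}
where the last step uses $\mathcal{R}(U^{k+1})\subseteq\mathcal{R}(U)$. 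Since $T^*$ is also $n$-centered (Remark~\ref{rem:existence of polar decomposition}) and M-P invertible, applying the same argument to $T^*$ and then taking adjoints yields $(T^\dag)^{k+1}T^{k+1}=P_{k+1}(T^*)$. Both products being projections immediately supplies the self-adjointness conditions~(c) and~(d), while condition~(a) follows from $P_{k+1}(T)T^{k+1}=U^{k+1}(U^{k+1})^*U^{k+1}|T^{k+1}|=T^{k+1}$.

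The hard part is condition~(b): $(T^\dag)^{k+1}T^{k+1}(T^\dag)^{k+1}=(T^\dag)^{k+1}$. Using~(d) this reduces to $P_{k+1}(T^*)(T^\dag)^{k+1}=(T^\dag)^{k+1}$, i.e., to the range inclusion $\mathcal{R}\bigl((T^\dag)^{k+1}\bigr)\subseteq\mathcal{R}\bigl((U^{k+1})^*\bigr)$. My key identity is
\begin{equation*}
T^\dag(U^k)^*=U^*|T^\dag|(U^k)^*=U^*\cdot U|T|^\dag U^*(U^k)^*=U^*U\cdot|T|^\dag(U^{k+1})^*=|T|^\dag(U^{k+1})^*,
\end{equation*}
where I use $|T^\dag|=|T^*|^\dag=(U|T|U^*)^\dag=U|T|^\dag U^*$ (a direct check of the four M-P conditions) together with $U^*U\,|T|^\dag=|T|^\dag$, which holds since $\mathcal{R}(|T|^\dag)=\mathcal{R}(|T|)=\mathcal{R}(T^*)=\mathcal{R}(U^*U)$. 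Combining this identity with the induction hypothesis $\mathcal{R}((T^\dag)^k)=\mathcal{R}((T^k)^\dag)=\mathcal{R}((T^k)^*)=\mathcal{R}((U^k)^*)$ gives, for every $x$, that $(T^\dag)^kx=(U^k)^*z$ for some $z$, and hence $(T^\dag)^{k+1}x=T^\dag(U^k)^*z=|T|^\dag(U^{k+1})^*z\in\mathcal{R}((U^{k+1})^*)$, which establishes~(b). Uniqueness of the Moore-Penrose inverse then yields $(T^{k+1})^\dag=(T^\dag)^{k+1}$, and the M-P invertibility of $T^{k+1}$ is immediate from the existence of this inverse.
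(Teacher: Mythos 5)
Your overall strategy---induction on $k$ with a direct verification of the four Moore--Penrose axioms for $X=(T^\dag)^{k+1}$---is legitimate and is organized differently from the paper, which inducts on $n$, first exhibits $(T^\dag)^{n+1}$ as an inner inverse of $T^{n+1}$ to obtain closedness of $\mathcal{R}(T^{n+1})$, and only then identifies the Moore--Penrose inverse by uniqueness. Your computation of $T^{k+1}(T^\dag)^{k+1}=P_{k+1}(T)$, the dual identity $(T^\dag)^{k+1}T^{k+1}=P_{k+1}(T^*)$ obtained by passing to $T^*$, and your conditions (a), (c), (d) are all correct and properly justified.

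However, your proof of condition (b) has a genuine gap at its last step. From $(T^\dag)^{k+1}x=|T|^\dag (U^{k+1})^*z$ you conclude that this vector lies in $\mathcal{R}\bigl((U^{k+1})^*\bigr)$, but operators act on the left: the vector $|T|^\dag\bigl((U^{k+1})^*z\bigr)$ lies in $\mathcal{R}(|T|^\dag)$, and it belongs to $\mathcal{R}\bigl((U^{k+1})^*\bigr)$ only if $|T|^\dag$ maps that submodule into itself. That invariance amounts to a commutation relation such as $\bigl[|T|^\dag,P_{k+1}(T^*)\bigr]=0$, which nothing you have established provides; and since conditions (a), (c), (d) together with both products being projections do not by themselves force (b), this inclusion is the entire remaining content of the lemma. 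The repair is to keep the modulus of $T^*$ instead of conjugating it away: since $\mathcal{R}\bigl((T^\dag)^{k+1}\bigr)=\mathcal{R}\bigl(T^\dag P_k(T^*)\bigr)$ and $T^\dag=U^*|T^*|^\dag$, use $\bigl[P_k(T^*),|T^*|\bigr]=0$ from Lemma~\ref{lem:property of n-centered-strong} together with Lemma~\ref{lem:trivial prop-0} to get $\bigl[P_k(T^*),|T^*|^\dag\bigr]=0$, whence
\begin{equation*}
T^\dag P_k(T^*)=U^*|T^*|^\dag P_k(T^*)=U^*P_k(T^*)\,|T^*|^\dag=(U^{k+1})^*U^k|T^*|^\dag,
\end{equation*}
whose range visibly lies in $\mathcal{R}\bigl((U^{k+1})^*\bigr)$. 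This is exactly the maneuver the paper performs; your version loses it by rewriting $|T^\dag|$ as $U|T|^\dag U^*$ and absorbing the factor of $U^*$ on the wrong side of the modulus.
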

\begin{proof}We prove this lemma by induction on $n$. The case $n=1$ is obvious. Assume that Lemma~\ref{lem:no3-result} is true for all
$n$-centered operators. Now, suppose that $T$ is $(n+1)$-centered. Then clearly, $T$ is $n$-centered and thus
by the inductive hypothesis, $T^k$ is M-P invertible for $1\le k\le n$ and \eqref{equ: Moore-Penrose inverse of T of power k} is satisfied.
We need only to prove that
\begin{equation}\label{equ:two tasks}T^{n+1}\ \mbox{is M-P invertible and}\ (T^{n+1})^\dag=(T^\dag)^{n+1}.\end{equation}

Let $P_m(T)$ and $P_m(T^*)$ be defined by \eqref{equ:defn of P n T T-sta} for any $m\in\mathbb{N}$. Since $T$ is $n$-centered, by  Definition~\ref{defn:definition of n-centered operator}  we have
\begin{align*}P_k(T)=P_{\overline{\mathcal{R}(T^k)}}\ \mbox{and}\ P_k(T^*)=P_{\overline{\mathcal{R}((T^k)^*)}}\,\ \mbox{for $1\le k\le n$},
\end{align*}
which is combined with \eqref{equ: Moore-Penrose inverse of T of power k} to conclude that for $1\le k\le n$,
\begin{align}\label{equ:m-p inverses of T k-111c}&(T^\dag)^k T^k=(T^k)^\dag T^k=P_{\mathcal{R}((T^k)^*)}=P_k(T^*),\\
\label{equ:m-p inverses of T k-1}&\mathcal{R}\big((T^\dag)^k\big)=\mathcal{R}\big((T^k)^\dag\big)=\mathcal{R}\big((T^k)^*\big)=\mathcal{R}\big(P_k(T^*)\big)\ \mbox{},\\
\label{equ:m-p inverses of T k-1c} &\mathcal{R}\big[\big((T^\dag)^k\big)^*\big]=\mathcal{R}\big[\big((T^k)^\dag\big)^*\big]=\mathcal{R}(T^k)=\mathcal{R}\big(P_k(T)\big).
\end{align}
Furthermore, as $T$ is $(n+1)$-centered, by Lemma~\ref{lem:property of n-centered-strong} we have
\begin{equation}\label{equ:two commutativitys from n to n+1--}\big[P_n(T),|T|\big]=\big[P_n(T^*),|T^*|\big]=0.\end{equation}
The equations above
together with Lemma~\ref{lem:trivial prop-0} yield
\begin{equation}\label{equ:two commutativitys from n to n+1}\big[P_n(T),|T|^\dag\big]=\big[P_n(T^*),|T^*|^\dag\big]=0.\end{equation}
Then by Lemma~\ref{lem:rang characterization-1}, we have
\begin{align*}\mathcal{R}\big((T^\dag)^{n+1}\big)&\subseteq\overline{\mathcal{R}\big(T^\dag (T^\dag)^n\big)}=\overline{\mathcal{R}\big(T^\dag P_n(T^*)\big)}\ \mbox{by \eqref{equ:m-p inverses of T k-1} for $k=n$}\nonumber\\
&=\overline{\mathcal{R}\big(U^*|T^*|^\dag \cdot P_n(T^*)\big)}\ \mbox{by Lemma~\ref{lem:polar decomposition of T dag} and \eqref{equ:relationship between M-P inverses}}\nonumber\\
&=\overline{\mathcal{R}\big(U^*P_n(T^*)\cdot |T^*|^\dag \big)}\ \mbox{by \eqref{equ:two commutativitys from n to n+1}}\nonumber\\
&\subseteq \mathcal{R}\big((U^{n+1})^*\big)=\mathcal{R}\big(P_{n+1}(T^*)\big),
\end{align*}
which means that
\begin{equation}\label{equ:noname-110}P_{n+1}(T^*)(T^\dag)^{n+1}=(T^\dag)^{n+1}.\end{equation}
Replacing the pair $(U,T)$ with $(U^*,T^*)$, by Lemma~\ref{lem:polar decomposition of T dag} and \eqref{equ:relationship between M-P inverses} we obtain
\begin{equation}\label{equ:m-p inverses of T k-1cc}(T^\dag)^*=(T^*)^\dag=U|(T^*)^\dag|=U |T|^\dag.\end{equation}
It follows that
\begin{align*}\mathcal{R}\big[\big((T^\dag)^{n+1}\big)^*\big]&\subseteq \overline{\mathcal{R}\big[(T^\dag)^*((T^\dag)^n)^*\big]}=\overline{\mathcal{R}\big[(T^\dag)^*P_n(T)\big]}\ \mbox{by \eqref{equ:m-p inverses of T k-1c}}\nonumber\\
&=\overline{\mathcal{R}\big(U|T|^\dag\cdot P_n(T)\big)}\ \mbox{by \eqref{equ:m-p inverses of T k-1cc}}\nonumber\\
&=\overline{\mathcal{R}\big(U P_n(T)\cdot |T|^\dag \big)}\ \mbox{by \eqref{equ:two commutativitys from n to n+1}}\nonumber\\
&\subseteq \mathcal{R}(U^{n+1})=\mathcal{R}\big(P_{n+1}(T)\big),
\end{align*}
hence
$P_{n+1}(T)[(T^\dag)^{n+1}]^*=[(T^\dag)^{n+1}]^*.$
Taking $*$-operation, we get
\begin{equation}\label{equ:noname-111}(T^\dag)^{n+1} P_{n+1}(T)= (T^\dag)^{n+1}.\end{equation}
Furthermore, from Lemma~\ref{lem:commutative property extended-3}~(ii), \eqref{equ:P1 T T star when Moore-Penrose inverbile} and \eqref{equ:two commutativitys from n to n+1--} we can obtain
\begin{align}\label{equ:two commutativitys from n to n+1--++}\big[P_n(T),T^\dag T\big]&=\big[P_n(T),U^*U\big]=0,\\
\label{equ:two commutativitys from n to n+1--++2}\big[P_n(T^*),TT^\dag\big]&=\big[P_n(T^*),UU^*\big]=0.\end{align}

Now we are ready to prove that $T^{n+1}$ is M-P invertible. To this end, we put
\begin{equation*}X=(T^n)^\dag T^n \cdot TT^\dag.\end{equation*}
Then by \eqref{equ: Moore-Penrose inverse of T of power k}, \eqref{equ:m-p inverses of T k-111c} and  \eqref{equ:two commutativitys from n to n+1--++2},
\begin{equation*}X=(T^\dag)^n T^n\cdot TT^\dag=P_n(T^*)TT^\dag=TT^\dag P_n(T^*)=TT^\dag \cdot (T^\dag)^n T^n,\end{equation*}
hence  the application of \eqref{equ:m-p inverse} gives
\begin{align}T^{n+1}&=T^n\cdot T=T^n(T^n)^\dag T^n\cdot TT^\dag T=T^n\cdot X\cdot T\nonumber\\
\label{eqn:T n+1 has an inner inverse}&=T^n \cdot TT^\dag \cdot (T^\dag)^n T^n\cdot T=T^{n+1}\cdot (T^\dag)^{n+1}\cdot T^{n+1},
\end{align}
which means clearly that $\mathcal{R}(T^{n+1})$ is closed, hence $T^{n+1}$ is M-P invertible.

 Since $T^{n+1}$ is M-P invertible and $T$ is $(n+1)$-centered, we have \begin{equation}\label{equ:Projections wrt m-p inverses-n+1 case}(T^{n+1})^\dag T^{n+1}=P_{n+1}(T^*)\ \mbox{and}\ T^{n+1}(T^{n+1})^\dag =P_{n+1}(T).
\end{equation}
We may combine \eqref{eqn:T n+1 has an inner inverse}, \eqref{equ:Projections wrt m-p inverses-n+1 case}, \eqref{equ:noname-110} with  \eqref{equ:noname-111} to get
\begin{eqnarray*}(T^{n+1})^\dag&=&(T^{n+1})^\dag \cdot T^{n+1}\cdot (T^{n+1})^\dag \nonumber\\
&=&(T^{n+1})^\dag \cdot T^{n+1} (T^\dag)^{n+1} T^{n+1}\cdot (T^{n+1})^\dag \nonumber\\
\label{eqn:contemprory of m-p inverse of T n+1} &=&P_{n+1}(T^*)\cdot (T^\dag)^{n+1}\cdot  P_{n+1}(T)=(T^\dag)^{n+1}.
\end{eqnarray*}
This completes the proof of \eqref{equ:two tasks}.
\end{proof}

\begin{theorem}\label{thm:no3-result corollay1} Let $T\in\mathcal{L}(H)$ be M-P invertible and have the polar decomposition $T=U|T|$. Then for any $n\in\mathbb{N}$, $T$ is $n$-centered if and only if $T^\dag$ is $n$-centered.
\end{theorem}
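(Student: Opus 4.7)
The plan is to combine Lemma~\ref{lem:no3-result} (which identifies $(T^k)^\dag$ with $(T^\dag)^k$ when $T$ is $n$-centered) with Lemma~\ref{lem:polar decomposition of T dag} (which tells us how polar decompositions behave under the Moore--Penrose inverse). The key observation is that $T^\dag$ has polar decomposition $T^\dag=U^*|T^\dag|$, so in order for $T^\dag$ to be $n$-centered we must verify that $(T^\dag)^k=(U^*)^k|(T^\dag)^k|$ is the polar decomposition for $1\le k\le n$. Since $(U^*)^k=(U^k)^*$, this is exactly what Lemma~\ref{lem:polar decomposition of T dag} produces, provided we apply it to $T^k$ rather than to $T$, and provided we can equate $(T^k)^\dag$ with $(T^\dag)^k$.

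For the forward direction, I would start by assuming $T$ is $n$-centered. Lemma~\ref{lem:no3-result} then yields that each $T^k$ ($1\le k\le n$) is M-P invertible and $(T^k)^\dag=(T^\dag)^k$. By Definition~\ref{defn:definition of n-centered operator}, $T^k=U^k|T^k|$ is the polar decomposition of $T^k$, so Lemma~\ref{lem:polar decomposition of T dag} applied to $T^k$ gives
\begin{equation*}
(T^k)^\dag = (U^k)^*\,\big|(T^k)^\dag\big|\ \text{is the polar decomposition of } (T^k)^\dag.
\end{equation*}
Rewriting $(U^k)^*=(U^*)^k$ and $(T^k)^\dag=(T^\dag)^k$, this reads $(T^\dag)^k=(U^*)^k|(T^\dag)^k|$, which is exactly the polar decomposition of $(T^\dag)^k$ coming from the partial isometry $U^*$ that appears in the polar decomposition of $T^\dag$. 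As this holds for every $k\in\{1,\dots,n\}$, $T^\dag$ is $n$-centered.

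For the converse, I would apply the forward implication to $T^\dag$ in place of $T$. Since $T$ is M-P invertible, $T^\dag$ is itself M-P invertible with $(T^\dag)^\dag=T$, and its polar decomposition is $T^\dag=U^*|T^\dag|$ by Lemma~\ref{lem:polar decomposition of T dag}. Hence if $T^\dag$ is $n$-centered, then by what has already been proved, $(T^\dag)^\dag=T$ is also $n$-centered, with partial isometry $(U^*)^*=U$, consistent with the polar decomposition $T=U|T|$.

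The only substantive obstacle is establishing the identity $(T^k)^\dag=(T^\dag)^k$, but this has already been handled by Lemma~\ref{lem:no3-result}, which was proved by induction using Lemma~\ref{lem:property of n-centered-strong}. With that identity in hand, the theorem reduces to an essentially formal application of the polar decomposition of the Moore--Penrose inverse, so the argument should be short and direct.
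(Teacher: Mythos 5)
Your argument is correct and coincides with the paper's own proof: both directions rest on Lemma~\ref{lem:no3-result} to identify $(T^k)^\dag$ with $(T^\dag)^k$, then on Lemma~\ref{lem:polar decomposition of T dag} applied to $T^k=U^k|T^k|$ to obtain $(T^\dag)^k=(U^*)^k|(T^\dag)^k|$ as the polar decomposition, and the converse is handled exactly as you do, by applying the forward implication to $T^\dag$ with $(T^\dag)^\dag=T$. No gaps.
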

\begin{proof} Suppose that $T$ is $n$-centered. Then for  $1\le k\le n$,  by Lemma~\ref{lem:no3-result} each $T^k$ is M-P invertible
such that $(T^k)^\dag=(T^\dag)^k$.  Moreover, since $T$ is $n$-centered, we know that for $1\le k\le n$, $T^k=U^k|T^k|$ is the polar decomposition, which gives the polar decomposition of $(T^k)^\dag$ as $(T^k)^\dag=(U^k)^*|(T^k)^\dag|$ by Lemma~\ref{lem:polar decomposition of T dag}.
Therefore, $(T^\dag)^k=(U^*)^k\big|(T^\dag)^k\big|$ for all $k=1,2,\cdots,n$, which means exactly that $T^\dag$ is $n$-centered, since the polar decomposition of $T^\dag$ is given by $T^\dag=U^*|T^\dag|$.

Conversely, if $T^\dag$ is $n$-centered, then $T=(T^\dag)^\dag$ is also $n$-centered. Therefore, the proof is complete.
\end{proof}

\begin{corollary}\label{cor:trivial-no3-result corollay2} Let $T\in\mathcal{L}(H)$ be M-P invertible and have the polar decomposition $T=U|T|$. Then the following statements are valid:
\begin{enumerate}
\item[{\rm (i)}] $T$ is centered if and only if $T^\dag$ is centered;
\item[{\rm (ii)}] $T$ is binormal if and only if $T^\dag $ is binormal.
\end{enumerate}
\end{corollary}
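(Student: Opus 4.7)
The plan is to deduce both parts as straightforward corollaries of Theorem~\ref{thm:no3-result corollay1}, combined with the characterizations of centered and binormal operators in terms of $n$-centeredness that were established in Section~\ref{sec:binormal-Aluthge-n centered}.

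First, I note that since $T$ is M-P invertible with polar decomposition $T=U|T|$, Lemma~\ref{lem:polar decomposition of T dag} guarantees that $T^\dag$ itself has the polar decomposition $T^\dag = U^*|T^\dag|$. Hence the notions of ``centered'', ``binormal'', and ``$n$-centered'' all make sense for $T^\dag$, and we may freely apply the results of Section~\ref{sec:binormal-Aluthge-n centered} to $T^\dag$. I also recall the standard fact $(T^\dag)^\dag = T$, so both directions of each equivalence are symmetric.

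For (i), I would invoke Corollary~\ref{thm:another result for centered operator}, which tells us that an operator (with polar decomposition) is centered if and only if it is $n$-centered for every $n\in\mathbb{N}$. Applying this to both $T$ and $T^\dag$, the statement ``$T$ is centered iff $T^\dag$ is centered'' reduces to ``$T$ is $n$-centered for all $n$ iff $T^\dag$ is $n$-centered for all $n$'', which is immediate from Theorem~\ref{thm:no3-result corollay1}.

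For (ii), I would use the equivalence \eqref{equ:characterization of binormal-2}, namely that (under the existence of the polar decomposition) $T$ is binormal if and only if $T$ is $2$-centered. Applying this to both $T$ and $T^\dag$, the equivalence reduces to the $n=2$ case of Theorem~\ref{thm:no3-result corollay1}. There is essentially no obstacle here; the only point to verify carefully is that $T^\dag$ does admit a polar decomposition so that \eqref{equ:characterization of binormal-2} is applicable to $T^\dag$, which is exactly what Lemma~\ref{lem:polar decomposition of T dag} supplies.
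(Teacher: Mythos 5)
Your proposal is correct and follows essentially the same route as the paper: part (i) via Corollary~\ref{thm:another result for centered operator} combined with Theorem~\ref{thm:no3-result corollay1}, and part (ii) via the characterization \eqref{equ:characterization of binormal-2} of binormality as $2$-centeredness. Your explicit remark that Lemma~\ref{lem:polar decomposition of T dag} supplies the polar decomposition of $T^\dag$ (so that these characterizations apply to $T^\dag$) is a point the paper leaves implicit, but it is a welcome clarification rather than a deviation.
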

\begin{proof}By Corollary~\ref{thm:another result for centered operator} and Theorem~\ref{thm:no3-result corollay1}, we know that
\begin{align*}\mbox{$T$ is centered}&\Longleftrightarrow \mbox{$T$ is $n$-centered for any $n\in\mathbb{N}$}\\
 &\Longleftrightarrow\mbox{$T^\dag$ is $n$-centered for any $n\in\mathbb{N}$}\\
 &\Longleftrightarrow\mbox{$T^\dag$ is centered}.
\end{align*}
In view of \eqref{equ:characterization of binormal-2}, item (ii) of this corollary is also true.
\end{proof}

\section{\textbf{An example of $n$-centered operators}}\label{sec:examples}

In this section, we provide an example for $n$-centered operators.
Let \begin{equation*}\label{equ:defn of a}e^{i\theta}=\cos\theta+i\sin\theta, \ \mbox{where}\ \theta=\frac{\sqrt{2}}{6}\pi.\end{equation*}
Then clearly, $\cos\theta>\frac12$ and
\begin{equation}\label{equ:pow of a not zero} (e^{i\theta})^k\ne 1\ \mbox{for any $k\in\mathbb{N}$}.\end{equation}
Let $\alpha$ be chosen such that
\begin{equation}\label{equ:chosen of alpha}\alpha\in(0,\frac{\pi}{2}) \ \mbox{and}\ \sin\alpha=2\cos\theta-1\in (0,1).\end{equation}
\begin{lemma}\label{lem:13k and 33k not equal to 0}Let $V=\left(
                       \begin{array}{ccc}
                        0 & 0 & 1 \\
                       \cos\alpha & \sin\alpha & 0 \\
                      \sin\alpha & -\cos\alpha & 0 \\
                   \end{array}
                \right)$, where $\alpha$ is given by \eqref{equ:chosen of alpha}.
Denote by $V^k=\left(V^{(k)}_{ij}\right)_{1\le i,j\le 3}$ for any $k\in\mathbb{N}$. Then
$V^{(k)}_{33}=V^{(1+k)}_{13}\ne 0$ for any $k\ge 2$.
\end{lemma}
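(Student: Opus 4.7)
To prove the lemma I will first reduce the claim to showing that the single sequence $V^{(k)}_{11}$ is nonzero for $k\geq 2$. The matrix $V$ has the special structure $Ve_3 = e_1$ (its third column) and $e_1^{\top}V = e_3^{\top}$ (its first row). For $k\geq 1$,
\[
V^{(k+1)}_{13} = e_1^{\top}V^{k}\,Ve_3 = e_1^{\top}V^{k} e_1 = V^{(k)}_{11},
\]
and the same two identities give
\[
V^{(k)}_{33} = e_3^{\top}V^{k-1}\,Ve_3 = e_3^{\top}V^{k-1}e_1 = (e_1^{\top}V)V^{k-1}e_1 = V^{(k)}_{11}.
\]
So all three entries coincide, and it remains only to show that $V^{(k)}_{11}\neq 0$ for $k\geq 2$.

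\emph{Closed form via diagonalization.} Direct checks give that $V$ is orthogonal, with $\operatorname{tr}(V)=\sin\alpha$ and $\det V=-1$. Since $\sin\alpha = 2\cos\theta - 1$, the characteristic polynomial factors as $(\lambda+1)(\lambda^{2}-2\cos\theta\,\lambda+1)$, so the eigenvalues of $V$ are $-1,\,e^{i\theta},\,e^{-i\theta}$, and the $-1$-eigenspace is orthogonal to the real $2$-plane on which $V$ acts as rotation through angle $\theta$. Letting $P_0$ denote the orthogonal projection onto the $-1$-eigenspace and $t=\|P_0 e_1\|^2$, the spectral decomposition yields
\[
V^{(k)}_{11} = (-1)^k t + (1-t)\cos(k\theta).
\]
The initial values $V^{(0)}_{11}=1$ and $V^{(1)}_{11}=0$ pin down $t=\cos\theta/(1+\cos\theta)$, producing the closed form
\[
V^{(k)}_{11} = \frac{(-1)^k\cos\theta + \cos(k\theta)}{1+\cos\theta}.
\]

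\emph{Nonvanishing for $k\geq 2$.} If the numerator vanished for some such $k$, standard sum-to-product identities would force $(k-1)\theta$ or $(k+1)\theta$ to lie in $\pi\mathbb{Z}$; doubling, $(e^{i\theta})^{2(k\pm 1)}=1$ for some integer $k\pm 1\geq 1$, contradicting \eqref{equ:pow of a not zero}. The only delicate step is the parity bookkeeping that selects the correct choice of $k-1$ versus $k+1$ according to whether $k$ is odd or even. The restriction $k\geq 2$ is sharp: in the borderline case $k=1$, the exponent $k-1=0$ does not yield a contradiction, and indeed $V^{(1)}_{11}=0$.
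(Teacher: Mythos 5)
Your proof is correct, and although it shares the paper's overall strategy (eigenvalues $-1,e^{\pm i\theta}$, a closed form for the relevant matrix entry, then a contradiction with \eqref{equ:pow of a not zero} via sum-to-product identities), the route to the closed form is genuinely different and cleaner. The paper obtains the identity $V^{(k)}_{33}=V^{(1+k)}_{13}$ only a posteriori, by computing $V^k=PD^kP^{-1}$ with an explicit and rather unpleasant pair $P$, $P^{-1}$ and comparing the two resulting formulas; you instead observe at the outset that $Ve_3=e_1$ and $e_1^{\top}V=e_3^{\top}$ force $V^{(k)}_{33}=V^{(k)}_{11}=V^{(k+1)}_{13}$, and then exploit the orthogonality of $V$ (so the $-1$-eigenline is orthogonal to the rotation plane) to write $V^{(k)}_{11}=(-1)^k t+(1-t)\cos(k\theta)$ and determine $t=\cos\theta/(1+\cos\theta)$ from a single initial condition. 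This avoids inverting $P$ entirely and makes the provenance of the formula transparent; your closed form agrees with the paper's after the shift $k\mapsto k-1$. The nonvanishing step is essentially the paper's: both reduce to $(k\pm 1)\theta\in\pi\mathbb{Z}$, the paper multiplying the two candidate exponents together to avoid deciding between $k-1$ and $k+1$, while you simply note that for $k\ge 2$ either choice gives a positive exponent after doubling, which suffices. Two cosmetic remarks: $V^{(0)}_{11}=1$ carries no information (it holds for every $t$), so only $V^{(1)}_{11}=0$ is doing work; and the ``parity bookkeeping'' is not really about selecting $k-1$ versus $k+1$ but about which sum-to-product identity applies --- in either parity one lands in $\pi\mathbb{Z}$, so the uniform doubling argument closes the case split without further care.
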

\begin{proof} The eigenvalues of $V$ are $\lambda_{1}=-1$, and
\begin{eqnarray*} \lambda_2=\frac{1+\sin\alpha}{2}+\frac{\sqrt{3-2\sin\alpha-\sin^{2}\alpha}}{2}i
=\cos\theta+i\sin\theta=e^{i\theta}, \lambda_3=\overline{\lambda_2},
\end{eqnarray*}
hence $\lambda_2\lambda_3=1$ and $\lambda_2+\lambda_3=1+\sin\alpha=2\cos\theta$.
Direct computation yields
$V=PDP^{-1}$, where $D=diag\,(-1,\lambda_2,\lambda_3)$ and
\begin{eqnarray*}P&=&\left(\begin{array}{ccc}
1 & 1 & 1 \\
\frac{\sin\alpha-1}{\cos\alpha} & \frac{\sin\alpha-\lambda_{2}^{2}}{\cos\alpha} & \frac{\sin\alpha-\lambda_{3}^{2}}{\cos\alpha} \\
-1 & \lambda_2 & \lambda_3 \\
\end{array}
\right),\\
P^{-1}&=&\left(\begin{array}{ccc}
\frac{\lambda_2+\lambda_3}{\lambda_2+\lambda_3+2} & \frac{\cos\alpha}{1-\sin\alpha}\cdot\frac{\lambda_2+\lambda_3-2}{\lambda_2+\lambda_3+2} & -\frac{\lambda_2+\lambda_3}{\lambda_2+\lambda_3+2} \\
\frac{1}{\lambda_2+\lambda_3+2} & \frac{\cos\alpha}{1-\sin\alpha}\cdot\frac{1-\lambda_3}{\lambda_2+\lambda_3+2} &  \frac{\lambda_3}{\lambda_2+\lambda_3+2}\\
\frac{1}{\lambda_2+\lambda_3+2} & \frac{\cos\alpha}{1-\sin\alpha}\cdot\frac{1-\lambda_2}{\lambda_2+\lambda_3+2}& \frac{\lambda_2}{\lambda_2+\lambda_3+2} \\
\end{array}
\right).
\end{eqnarray*}
Therefore, for any $k\in\mathbb{N}$ we have
$$V^k=PD^kP^{-1}=\left(V^{(k)}_{ij}\right)_{1\le i,j\le 3},$$
where
\begin{eqnarray*}
&&V^{(k)}_{13}=\frac{(-1)^{k+1}(\lambda_2+\lambda_3)+\lambda_2^{k-1}+\lambda_3^{k-1}}
{\lambda_2+\lambda_3+2},\\
&&V^{(k)}_{33}=\frac{(-1)^{k+2}(\lambda_2+\lambda_3)+\lambda_2^{k}+\lambda_3^{k}}
{\lambda_2+\lambda_3+2}.
\end{eqnarray*}
It follows that $V^{(k)}_{33}=V^{(1+k)}_{13}$ for all $k\in\mathbb{N}$. In what follows, we show that $V^{(k)}_{13}\ne 0$ for any $k\ge 3$.

Suppose on the contrary that $V_{13}^{(k)}=0$ for some $k\ge 3$. Then the expression of $V_{13}^{(k)}$ gives
\begin{eqnarray}\label{equ: the case of 13k}
(-1)^{k+1}(\lambda_2+\lambda_3)+\lambda_2^{k-1}+\lambda_3^{k-1}=0.
\end{eqnarray}
If $k=2m$ for some $m\ge 2$, then by \eqref{equ: the case of 13k} we have $\cos\theta=\cos (2m-1)\theta$, and thus
$$0=\cos (2m-1)\theta-\cos\theta=-2\sin m\theta \sin (m-1)\theta.$$
It follows that
$(e^{i\theta})^{2m(m-1)}=1,$
which  is  contradiction to \eqref{equ:pow of a not zero}. Similarly, if $k=2m+1$ for some $m\in\mathbb{N}$, then by \eqref{equ: the case of 13k} we can get
$$\cos (m+\frac12)\theta\cdot \cos (m-\frac12)\theta=0.$$
In the case that $\cos (m+\frac12)\theta=0$, we have
\begin{eqnarray*}&&\cos (2m+1)\theta=2\cos^2 (m+\frac12)\theta-1=-1,\\
&&\cos (4m+2)\theta=2\cos^2 (2m+1)\theta-1=1.
\end{eqnarray*}
On the other hand, if $\cos (m-\frac12)\theta=0$, then $\cos (4m-2)\theta=1$. It follows that
$(e^{i\theta})^{(4m+2)(4m-2)}=1,$
which  is also  contradiction to \eqref{equ:pow of a not zero}.
\end{proof}

\begin{theorem} For any $n\ge 2$, there exist a Hilbert space $H$ and $T\in\mathcal{L}(H)$ such that $T$ is $n$-centered, whereas $T$ is not  $(n+1)$-centered.
\end{theorem}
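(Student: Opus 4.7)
The plan is to construct, for each $n \ge 2$, a concrete finite-dimensional Hilbert space $H$ and an operator $T \in \mathcal{L}(H)$ whose polar decomposition $T = U|T|$ is designed so that the commutators $[U^k|T|(U^k)^*, |T|]$ vanish for $1 \le k \le n-1$ but are nonzero at $k = n$. By Theorem~\ref{thm:no1 result}, this is exactly the condition that $T$ be $n$-centered but not $(n+1)$-centered, so the whole argument reduces to a direct block-matrix calculation of these $n$ commutators.

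Concretely, I fix any $d > 0$ with $d \ne 1$, set $D = \mathrm{diag}(1,1,d) \in \mathcal{L}(\mathbb{C}^3)$, let $H = \bigoplus_{j=1}^{n-1} \mathbb{C}^3$, $P = \mathrm{diag}(D, \dots, D) \in \mathcal{L}(H)$, and define the block cyclic shift
\[
U(x_1, x_2, \dots, x_{n-1}) = (V x_{n-1}, x_1, x_2, \dots, x_{n-2})
\]
with $V$ the orthogonal matrix from Lemma~\ref{lem:13k and 33k not equal to 0}. Then $U$ is unitary, $P > 0$ is invertible, and $T := UP$ satisfies $T^*T = P^2$, so that $T = UP$ is the polar decomposition with $|T| = P$ and partial isometry $U$.

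The key step is an induction on $k$ based on the identity
\[
U\,\mathrm{diag}(A_1,\dots,A_{n-1})\,U^{-1} = \mathrm{diag}(V A_{n-1} V^*, A_1, \dots, A_{n-2}),
\]
which yields, for $1 \le k \le n-1$,
\[
U^k P (U^k)^* = \mathrm{diag}(\underbrace{VDV^*, \dots, VDV^*}_{k \text{ copies}}, \underbrace{D, \dots, D}_{(n-1-k) \text{ copies}}),
\]
and for $k = n$,
\[
U^n P (U^n)^* = \mathrm{diag}(V^2 D V^{-2}, VDV^*, \dots, VDV^*).
\]
A direct $3 \times 3$ computation gives $VDV^* = \mathrm{diag}(d, 1, 1)$, which is diagonal and hence commutes with $D$; therefore each block of $[U^k P (U^k)^*, P]$ vanishes for $1 \le k \le n-1$, and Theorem~\ref{thm:no1 result} implies that $T$ is $n$-centered.

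It remains to check that $[V^2 D V^{-2}, D] \ne 0$, which makes the first block of $[U^n P (U^n)^*, P]$ nonzero and forces $T$ to fail $(n+1)$-centeredness. Using the entries $(V^2)_{23} = \cos\alpha$ and $(V^2)_{33} = \sin\alpha$ (both nonzero by the choice of $\alpha$ and available from the proof of Lemma~\ref{lem:13k and 33k not equal to 0}), a short computation gives $(V^2 D V^{-2})_{23} = \sin\alpha\cos\alpha\,(d-1)$, whence $[V^2 D V^{-2}, D]_{23} = \sin\alpha\cos\alpha\,(d-1)^2 \ne 0$. The main obstacle is simply the block-matrix bookkeeping during the inductive description of $U^k P (U^k)^*$; once that is in hand, both the vanishing for $k \le n-1$ and the explicit nonzero entry at $k = n$ reduce to elementary $3 \times 3$ matrix arithmetic.
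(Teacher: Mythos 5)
Your proof is correct, but it takes a genuinely different route from the paper's. Both arguments reduce the statement, via Theorem~\ref{thm:no1 result}, to arranging that $\big[U^k|T|(U^k)^*,|T|\big]=0$ for $1\le k\le n-1$ while the commutator at $k=n$ is nonzero; the constructions realizing this differ. The paper works on the infinite-dimensional space $\bigoplus_{m=1}^{\infty}\mathbb{C}^3$ with a weighted block shift whose blocks are $T_m=V|T_m|$, and tunes the scalar sequence $g(m)$ so that the commutator conditions hold exactly for $k\le n-1$; this requires knowing that $V^{(k)}_{33}=V^{(1+k)}_{13}\ne 0$ for \emph{all} $k\ge 2$ (Lemma~\ref{lem:13k and 33k not equal to 0}), which is where the number-theoretic choice $\theta=\frac{\sqrt{2}}{6}\pi$ (so that $e^{i\theta}$ is never a root of unity) enters. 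Your construction is finite-dimensional: $T=UP$ on $(\mathbb{C}^3)^{n-1}$ with $U$ a $V$-twisted cyclic block shift (unitary) and $P$ positive invertible, so $T=U|T|$ is automatically the polar decomposition, and your conjugation identity exhibits $U^kP(U^k)^*$ as an explicit block-diagonal operator. Since $Ve_3=e_1$, one has $VDV^*=I+(d-1)e_1e_1^*=\mathrm{diag}(d,1,1)$, which commutes with $D$ and gives the vanishing for $1\le k\le n-1$; and $V^2e_3=(0,\cos\alpha,\sin\alpha)^{T}$ gives $\big[V^2D(V^2)^*,D\big]_{23}=(d-1)^2\sin\alpha\cos\alpha\ne 0$, which gives the failure at $k=n$ (I checked these computations; they are right, including the $n=2$ case where $H=\mathbb{C}^3$ and $U=V$). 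What your approach buys: it needs only the entries of $V$ and $V^2$ --- any real orthogonal $V$ with $VDV^*$ diagonal but $V^2D(V^2)^*$ not commuting with $D$ would do, so any $\alpha\in(0,\pi/2)$ works and Lemma~\ref{lem:13k and 33k not equal to 0} is not needed at all --- and it produces an \emph{invertible} example on a $3(n-1)$-dimensional space with entirely elementary verification. What the paper's approach buys is a single infinite-dimensional template in which the one parameter $g$ can be re-tuned to realize different commutator patterns for all $n$ at once.
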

\begin{proof} Let $\alpha$ be given by \eqref{equ:chosen of alpha} and $\{g(m)\}_{m=1}^\infty$ be any unspecified sequence such that $0<g(m)\le 4$ for all $m\in \mathbb{N}$. For each $m\in\mathbb{N}$, we put
\begin{eqnarray*} T_m=\left(
                       \begin{array}{ccc}
                        0 & 0 & \sec\alpha\cdot g(m+1) \\
                       g(m) & \tan\alpha\cdot g(m) & 0 \\
                      \tan\alpha\cdot g(m) & -g(m) & 0 \\
                   \end{array}
                \right)\in\mathbb{C}^{3\times 3}.
\end{eqnarray*}
It is easy to verify that the polar decomposition of $T_m$ is given by $T_m=V|T_m|$, where $V$ is the unitary matrix given by Lemma~\ref{lem:13k and 33k not equal to 0}, and
\begin{eqnarray*}|T_m|=diag\big(\sec\alpha\cdot g(m),  \sec\alpha\cdot g(m), \sec\alpha\cdot g(m+1)\big).
\end{eqnarray*}
As in \cite[Example~2]{Campbell-2}, we put
\begin{eqnarray*} T=\left(
                       \begin{array}{ccccc}
                        0 & 0 & 0 & 0 & \cdots\\
                       T_1 & 0 & 0 & 0 & \cdots \\
                      0 & T_2 & 0 & 0 & \cdots \\
                      0 & 0 & T_3 & 0 & \cdots\\
                      \vdots & \vdots & \vdots & \vdots &\ddots\\
                   \end{array}
                \right)\in\mathcal{L}(H),
\end{eqnarray*}
where  $H$ is the Hilbert space derived from countable number of copies of $\mathbb{C}^3$, that is,
$$H=\big\{\xi=(\xi_{i}): \xi_{i}\in\mathbb{C}^{3}\ \mbox{for each $i\in\mathbb{N}$ such that}\  \sum\limits_{i=1}^{\infty}\|\xi_{i}\|_2^2<+\infty \big\},$$
where $\Vert\cdot\Vert_2$ stands for the $2$-norm on $\mathbb{C}^3$. It is easy to verify that
\begin{equation*} T^*T=diag(T_1^*T_1, T_2^*T_2, \cdots,T_m^*T_m, \cdots )
\end{equation*}
and thus the polar decomposition of $T$ is given by $T=U|T|$, where
\begin{eqnarray*}U&=&\left(
                       \begin{array}{ccccc}
                        0 & 0 & 0 & 0 & \cdots\\
                       V & 0 & 0 & 0 & \cdots \\
                      0 & V & 0 & 0 & \cdots \\
                      0 & 0 & V & 0 & \cdots\\
                      \vdots & \vdots & \vdots & \vdots &\ddots\\
                   \end{array}
                \right)\ \mbox{and}\ |T|=diag(|T_1|,|T_2|,\cdots).
\end{eqnarray*}
It follows that for any $k\in\mathbb{N}$,
\begin{equation*}U^k|T|(U^k)^*=diag(0_{k\times k}, V^k|T_1|(V^k)^*, V^k|T_2|(V^k)^*,\cdots),
\end{equation*}
hence
\begin{eqnarray}&&\big[U^{k}|T|(U^{k})^*,|T|\big]=0\nonumber\\
\label{eqn:temp-condition wrt k-1}&&\Longleftrightarrow  V^{k}|T_{m}|(V^{k})^*\cdot|T_{m+k}|=|T_{m+k}|\cdot V^{k}|T_{m}|(V^{k})^*, \forall\,m\in\mathbb{N}.
\end{eqnarray}
For any $l\in\mathbb{N}$, let $|T_l|=\sec\alpha\cdot g(l)\cdot I+W_l$ be the decomposition of $|T_l|$, where
\begin{eqnarray*}W_l=diag\Big(0,0,\sec\alpha\cdot \big(g(l+1)-g(l)\big)\Big).
\end{eqnarray*}
Since $V$ is unitary, we have $V^{k}(V^{k})^*=I$, hence \eqref{eqn:temp-condition wrt k-1} is equivalent to
\begin{eqnarray*}V^{k}\cdot W_m\cdot  (V^{k})^*\cdot W_{m+k}=W_{m+k} \cdot V^{k}\cdot W_m\cdot (V^{k})^*,
\end{eqnarray*}
which is furthermore equivalent to
\begin{eqnarray*}&&V^{(k)}_{13}V^{(k)}_{33}\cdot \big(g(m+1)-g(m)\big)\cdot \big(g(m+k+1)-g(m+k)\big)=0,\\
&&V^{(k)}_{23}V^{(k)}_{33}\cdot \big(g(m+1)-g(m)\big)\cdot \big(g(m+k+1)-g(m+k)\big)=0,
\end{eqnarray*}
where $V^k$ is denoted by  $V^k=\left(V^{(k)}_{ij}\right)_{1\le i,j\le 3}$. Note that $V^{(1)}_{33}=0$, $V^{(2)}_{23}=\cos\alpha\ne 0$ and
$V^{(k)}_{33}=V^{(1+k)}_{13}\ne 0$ for any $k\ge 2$. The discussion above indicates that $\big[U|T|U^*,|T|\big]=0$ and
for any $k\ge 2$,
\begin{eqnarray}\hspace{-2em}&&\big[U^{k}|T|(U^{k})^*,|T|\big]=0\nonumber\\
\hspace{-2em}\label{eqn:temp-condition wrt k-2}&&\Longleftrightarrow  \big(g(m+1)-g(m)\big)\cdot \big(g(m+k+1)-g(m+k)\big)=0, \forall\,m\in\mathbb{N}.
\end{eqnarray}

Now, let $n\in\mathbb{N}$ be given. If $n=2$,  we put
\begin{equation}g(m)=m\ \mbox{for $m=1,2,3,4$ and}\ g(m)\equiv 1 \ \mbox{for all $m\ge 5$}.
\end{equation}
Then \eqref{eqn:temp-condition wrt k-2} is false for $k=2$ and $m=1$, therefore
$$\big[U|T|U^*,|T|\big]=0\ \mbox{and}\ \big[U^2|T|(U^2)^*,|T|\big]\ne 0.$$
By Theorem~\ref{thm:no1 result} we conclude that $T$ is $2$-centered, whereas $T$ is not $3$-centered.

In the case that $n\ge 3$, we put
\begin{equation*}g(1)=1, g(2)=\cdots=g(n+1)=2\ \mbox{and}\ g(m)\equiv 1 \ \mbox{for all $m\ge n+2$}.
\end{equation*}
Then for any $k=2,\cdots, n-1$, \eqref{eqn:temp-condition wrt k-2} is satisfied for all $m\in\mathbb{N}$, whereas
\eqref{eqn:temp-condition wrt k-2} is false for $k=n$ and $m=1$. Therefore,
$$\big[U^k|T|(U^k)^*,|T|\big]=0\ \mbox{for $1\le k\le n-1$ and}\ \big[U^n|T|(U^n)^*,|T|\big]\ne 0.$$
By Theorem~\ref{thm:no1 result} we conclude that $T$ is $n$-centered, whereas $T$ is not $(n+1)$-centered.
\end{proof}


\bibliographystyle{amsplain}

\begin{thebibliography}{99}

\bibitem{Aluthge}A. Aluthge, \textit{On $p$-hyponormal operators for $0<p<1$}, Integr. Equ. Oper. Theory \textbf{13} (1990), no. 3, 307--315.

\bibitem{Campbell-1}S. L. Campbell, \textit{Linear operators for which $T^*T$ and $TT^*$ commute}, Proc. Amer. Math. Soc. \textbf{34} (1972), 177--180.

\bibitem{Campbell-2}S. L. Campbell, \textit{Linear operators for which $T^*T$ and $TT^*$ commute. II}, Pacific J. Math. \textbf{53} (1974), 355--361.



\bibitem{Frank-Sharifi}M. Frank and K. Sharifi, \textit{Generalized inverses and polar decomposition of unbounded regular operators on Hilbert $C^*$-modules}, J. Operator Theory \textbf{64} (2010), no. 2, 377--386.


\bibitem{Furuta}T. Furuta, \textit{On the polar decomposition of an operator}, Acta Sci. Math. \textbf{46} (1983), no. 1-4, 261--268.


\bibitem{Gebhardt-Schm¨¹dgen}R. Gebhardt and K. Schm$\ddot{u}$dgen, \textit{Unbounded operators on Hilbert $C^*$-modules}, Internat. J. Math. \textbf{26} (2015), no. 11, 197--255.


\bibitem{Gesztesy-Malamud-Mitrea-Naboko}F. Gesztesy, M. Malamud, M. Mitrea and S. Naboko, \textit{Generalized polar decompositions for closed operators in Hilbert spaces and some applications}, Integr. Equ. Oper. Theory \textbf{64} (2009), no. 1, 83--113.


\bibitem{Halmos}P. R. Halmos,  \textit{A Hilbert space problem book}, Van Nostrand, Princeton, N.J., 1967.

\bibitem{Ichinose-Iwashita}W. Ichinose and K. Iwashita, \textit{On the uniqueness of the polar decomposition of bounded operators in Hilbert spaces}, J. Operator Theory \textbf{70} (2013), no. 1, 175--180.

\bibitem{Ito} M. Ito, T. Yamazaki and M. Yanagida, \textit{On the polar decomposition of the Aluthge transformation and related results}, J. Operator Theory \textbf{51} (2004), no. 2, 303--319.


\bibitem{Ito-Yamazaki-Yanagida} M. Ito, T. Yamazaki and M. Yanagida, \textit{On the polar decomposition of the product of two operators and its applications}, Integr. Equ. Oper. Theory \textbf{49} (2004), no. 4, 461--472.


\bibitem{Jabbarzadeh-Bakhshkandi} M. R. Jabbarzadeh and M. J. Bakhshkandi, \textit{Centered operators via Moore-Penrose inverse and Aluthge transformations},  Filomat \textbf{31} (2017), no. 20, 6441--6448.

\bibitem{Karizaki-Hassani-Amyari}M. M. Karizaki, M. Hassani and M. Amyari, \textit{Moore-Penrose inverse of product operators in Hilbert $C^*$-modules}, Filomat \textbf{30} (2016), no. 13, 3397--3402.


\bibitem{Lance}E. C. Lance, \textit{Hilbert $C^*$-modules-A toolkit for operator algebraists}, Cambridge University Press, Cambridge, 1995.


\bibitem{Liu-Luo-Xu}N. Liu, W. Luo and Q. Xu, \textit{The polar decomposition for adjointable operators on Hilbert $C^*$-modules and centered operators}, accepted for publication in Advances in Operator Theory. arXiv:1807.01598v2


\bibitem{Morrel} B. B. Morrel and P. S. Muhly, \textit{Centered operators}, Studia Math. \textbf{51} (1974), 251--263.

\bibitem{Paulsen} V. Paulsen, C. Pearcy and S. Petrovi$\acute{c}$, \textit{On centered and weakly centered operators}, J. Funct. Anal. \textbf{128} (1995), no. 1, 87--101.



\bibitem{Szafraniec}F. H. Szafraniec, \textit{Murphy's Positive definite kernels and Hilbert $C^*$-modules reorganized}, Noncommutative harmonic analysis with applications to probability II, 275--295, Banach Center Publ. \textbf{89}, Polish Acad. Sci. Inst. Math., Warsaw, 2010.


\bibitem{Stochel-Szafraniec} J. Stochel and F. H. Szafraniec, \textit{The complex moment problem and subnormality: a polar decomposition approach}, J. Funct. Anal. \textbf{159} (1998), no. 2, 432--491.


\bibitem{Wegge-Olsen}N. E. Wegge-Olsen, \textit{$K$-theory and $C^*$-algebras: a friendly approach}, Oxford Univ. Press, Oxford, England, 1993.


\bibitem{Xu}Q. Xu, \textit{Common hermitian and positive solutions to the
adjointable operator equations $AX=C, XB=D$}, Linear Algebra Appl. \textbf{429} (2008), no. 1, 1--11.


\bibitem{Xu-Sheng} Q. Xu and L. Sheng, \textit{Positive semi-definite matrices of adjontable operators on Hilbert $C^*$-modules}, Linear Algebra Appl. \textbf{428} (2008), no. 4, 992--1000.



\end{thebibliography}

\end{document}